%------------------------------------------------------------------------------
% Beginning of journal.tex
%------------------------------------------------------------------------------
%
% AMS-LaTeX version 2 sample file for journals, based on amsart.cls.
%
%        ***     DO NOT USE THIS FILE AS A STARTER.      ***
%        ***  USE THE JOURNAL-SPECIFIC *.TEMPLATE FILE.  ***
%
% Replace amsart by the documentclass for the target journal, e.g., tran-l.
%
\documentclass[12pt]{amsart}
\usepackage{amssymb}
\usepackage{amsthm}
\usepackage{mathrsfs}
\usepackage{amssymb}
\usepackage{amsfonts}
\usepackage[english]{babel}
\usepackage[T1]{fontenc}
\usepackage[latin1]{inputenc}
\usepackage{fullpage}
\usepackage{amsmath}
\usepackage[all]{xy}
\usepackage{stmaryrd}

\newtheorem{theorem}{Theorem}[section]
\newtheorem{lemma}[theorem]{Lemma}

\theoremstyle{definition}
\newtheorem{definition}[theorem]{Definition}

\newtheorem{proposition}[theorem]{Proposition}
\newtheorem{conjecture}[theorem]{Conjecture}

\theoremstyle{remark}
\newtheorem{remark}[theorem]{Remark}
\def\hh{\overset{\circ}{h}}
\def\dvg{dv_{\bar{g}}}
\newcommand{\bg}{\bar{g}}
\newcommand{\funchg}[1]{\mathcal{H}_{\bar{g}}#1}
\newcommand{\funch}{\mathcal{H}_{\bar{g}}}

\numberwithin{equation}{section}

%    Absolute value notation

%    Blank box placeholder for figures (to avoid requiring any
%    particular graphics capabilities for printing this document).

\makeatletter
\newsavebox{\@brx}
\newcommand{\llangle}[1][]{\savebox{\@brx}{\(\m@th{#1\langle}\)}%
  \mathopen{\copy\@brx\kern-0.5\wd\@brx\usebox{\@brx}}}
\newcommand{\rrangle}[1][]{\savebox{\@brx}{\(\m@th{#1\rangle}\)}%
  \mathclose{\copy\@brx\kern-0.5\wd\@brx\usebox{\@brx}}}
\makeatother

\begin{document}

\title{Volume comparison theorem with respect to sigma-2 curvature}

\author{Jiaqi Chen}
\address{School of Electrical Engineering and Automation, Xiamen University of Technology, Xiamen 361024, P.R. China}
\email{chenjiaqi@xmut.edu.cn}

\author{Yi Fang}
\address{Department of Mathematics, Anhui University of Technology, Ma'anshan 243002, P.R.China}
\email{flxy85@163.com}

\author{Yan He}
\address{Faculty of Mathematics and Statistics, Hubei Key Laboratory of Applied Mathematics, Hubei University,  Wuhan 430062, P.R. China}
\email{helenaig@hotmail.com}

\author{Jingyang Zhong}
\address{College of Mathematics and Computer Sciences,  Fuzhou University, Fuzhou 350108, P.R. China}
\email{jyzhong@fzu.edu.cn}

%    General info
%\subjclass[2000]{Primary 53C20; Secondary 53C21, 53C24}

%\date{January 1, 2001 and, in revised form, June 22, 2001.}

%\dedicatory{This paper is dedicated to our advisors.}
\thanks{The first author was supported by the Scientific Research Foundation of Xiamen University of Technology Under Grant No. YKJ23009R. The second author was supported by National Natural Science Foundation of China(11801006, 12071489). The third author was supported by funds from Hubei Provincial Department of Education Key Projects D20171004. }

\keywords{Partial differential equation, $\sigma_2$-curvature, Volume comparison theorem, Einstein metrics}

\begin{abstract}
  In this paper, we investigate the volume comparison theorem related to $\sigma_2$-curvature. In particular,
  we show that volume comparison theorem with respect to $\sigma_2$-curvature holds for metrics close to strictly stable positive Einstein metrics. By applying similar techniques, we derive the local rigidity theorem for strictly stable Ricci flat manifolds with respect to $\sigma_2$-curvature, which shows it admits no metric with positive $\sigma_2$-curvature near strictly stable Ricci-flat metrics.
\end{abstract}

\maketitle

%% The correct journal style for \specialsection is all uppercase; a known bug
%% in amsart.cls prevents this, so input must be uppercase until it is fixed.
%\specialsection*{This is a Special Section Head}

%%%%%%%%%%%%%%%%%%%%%%%%%%%%%%%%%%%%%%%%%%%%%%%%%%%%%%%%%%%%%%%%%%%%%%%%
%\footnote{Here is an example of a footnote. Notice that this footnote
%text is running on so that it can stand as an example of how a
%footnote with separate paragraphs should be written.
%\par
%And here is the beginning of the second paragraph.}%
%%%%%%%%%%%%%%%%%%%%%%%%%%%%%%%%%%%%%%%%%%%%%%%%%%%%%%%%%%%%%%%%%%%%%%%%

\section{Introduction}\label{sec:introduction}
Let $(M^n, g)$ be a closed Riemannian manifold of dimension $n$. The Ricci curvature tensor and scalar curvature are denoted by $\text{Ric}_g$ and $\text{R}_g$ respectively. The classic volume comparison theorem states that if
\begin{equation*}
    \text{Ric}_g \geq (n-1)g,
\end{equation*}
then
\begin{equation*}
\text{V}_{M^n}(g) \leq \text{V}_{\mathbb{S}^n}(g_{\mathbb{S}^n})
\end{equation*}
where $\mathbb{S}^n$ is the unit round sphere and $g_{\mathbb{S}^n}$ is the canonical metric. Motivated by this volume comparison theorem, R. Schoen \cite{S} proposed the following conjecture on volume comparison theorems involving scalar curvature on closed hyperbolic manifolds.

\begin{conjecture}
    Suppose $(M^n, \bg)$ be any closed hyperbolic manifold of dimension $n \geq 3$. Then for any metric $g$ on $M^n$ with
    \begin{equation*}
        R_g \geq R_{\bg},
    \end{equation*}
    then the following volume comparison
    \begin{equation*}
        \text{V}_{M^n}(g) \geq \text{V}_{M^n}(\bg)
    \end{equation*}
    holds.
\end{conjecture}

Schoen's conjecture was proved confirmatively in 3-dimensional case due to the works of Hamilton \cite{H} and Perelman \cite{P}. For higher dimensional cases, Besson, Courtois and Gallot \cite{BCG} verified it for metrics $C^2$-close to the canonical metric $\bg$. After that, they also showed that the same result holds without assuming the metrics $C^2$-closeness if one replaces the assumption on scalar curvature by Ricci curvatures.

H.Bray \cite{B} posed the following volume comparison conjecture related to Ricci curvature tensor and scalar curvature, and he gave affirmative proof for 3-dimensional case. And the conjecture is still open for $n \geq 4$.

\begin{conjecture}
    There exists a constant $\varepsilon_n \in (0,1)$ such that for any closed Riemannian manifold $(M^n,g)$ satisfying
    \begin{align*}
      \text{R}_g \geq n(n-1), \ \ \  \text{Ric}_g \geq \varepsilon_n(n-1)g,
    \end{align*}
    then the volume comparison
    \begin{align*}
      \text{V}_{M^n}(g) \leq \text{V}_{\mathbb{S}^n}(g_{\mathbb{S}^n})
    \end{align*}
    holds, where $\mathbb{S}^n$ is the unit round sphere and $g_{\mathbb{S}^n}$ is the canonical metric.
\end{conjecture}

If $g$ is sufficiently closed to the stable Einstein metric $\bg$, i.e. $||g-\bg||_{C^2(M^2,\bg)} < \varepsilon_0$ for some small constant $\varepsilon_0 > 0$, then Yuan \cite{Y} proved the volume comparison theorem with respect to scalar curvature holds. And for the volume comparison of asymptotically hyperbolic manifold, please refer to \cite{HJS}.

Before we state our results, we give the definition of the stable Einstein metric.

\begin{definition}
    For $n \geq 3$, suppose $(M^n, \bg)$ is a closed Einstein manifold with
    \begin{align*}
    \text{Ric}_{\bg} = (n-1) \lambda \bg.
    \end{align*}
    The Einstein metric $\bg$ is said to be strictly stable, if the Einstein operator $\Delta_E^{\bg} = \Delta_{\bg}+ 2Rm_{\bg}$ is a negative operator on $S^{TT}_{2, \bg}(M) \setminus \{0\}$, where
    \begin{align*}
    S^{TT}_{2, \bg}(M):=\{ h\in S_2(M)| \delta_{\bg} h=0, tr_{\bg}h=0\}.
    \end{align*}
    is the space of transverse-traceless symmetric $2$-tensor on $(M^n, \bg)$. Moreover, $\bg$ is called strictly stable is the Einstein operator $\Delta_{\bg}$ is a strictly negative operator.
\end{definition}

There are several different ways to define the stability of Einstein metric, and we choose the one with Einstein operator for our convenience. With this definition, Yuan proved the following volume comparison theorem with respect to scalar curvature.

\begin{theorem}(\cite{Y})
    Suppose $(M^n,\bar{g})$ is a strictly stable Einstein manifold with
    \begin{align*}
      \text{Ric}_{\bar{g}}=(n-1)\lambda\bar{g}.
    \end{align*}
    Then there exists a constant $\varepsilon_0>0$ for any metric $g$ on $M^n$ satisfying
    \begin{equation*}
    R_g\geq R_{\bar{g}}, \ \ \ ||g-\bar{g}||_{C^2(M^2,\bar{g})}<\varepsilon_0,
    \end{equation*}
    the following volume comparison hold:
    \begin{enumerate}
      \item if $\lambda>0$, then $\text{Vol}_{M}(g)\leq \text{Vol}_M(\bar{g})$;
      \item if $\lambda<0$, then $\text{Vol}_{M}(g)\geq \text{Vol}_M(\bar{g})$,
    \end{enumerate}
    with equality holds in either case if and only if $g$ is isometric to $\bar{g}$.
\end{theorem}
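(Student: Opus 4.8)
The plan is to identify $\bar g$ with a constrained local extremum of the normalized total scalar curvature functional, using the Einstein equation to supply a Lagrange multiplier and the strict stability hypothesis to fix the sign of the second variation. I treat $\lambda>0$ throughout; the case $\lambda<0$ follows by reversing the relevant inequalities.

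I would set
\[
\mathcal{F}(g):=\text{Vol}_M(g)-\frac{1}{(n-1)(n-2)\lambda}\int_M R_g\,dv_g ,
\]
which is diffeomorphism invariant, and note that, since $Ric_{\bar g}=(n-1)\lambda\bar g$ --- equivalently $\gamma_{\bar g}^{\ast}(1)=-Ric_{\bar g}$, where $\gamma_{\bar g}^{\ast}$ is the $L^2$-formal adjoint of the linearized scalar curvature $DR_{\bar g}$ --- the metric $\bar g$ is a critical point of $\mathcal{F}$, and in fact the constant function acts as a Lagrange multiplier: $D\,\text{Vol}_{\bar g}(h)=-\tfrac{1}{2(n-1)\lambda}\int_M DR_{\bar g}(h)\,dv_{\bar g}$. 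Using $R_{\bar g}=n(n-1)\lambda$, a short computation then gives the identity
\[
\text{Vol}_M(g)-\text{Vol}_M(\bar g)=-\frac{n-2}{2}\bigl(\mathcal{F}(g)-\mathcal{F}(\bar g)\bigr)-\frac{1}{2(n-1)\lambda}\int_M (R_g-R_{\bar g})\,dv_g .
\]
Since $R_g\ge R_{\bar g}$, the last term is nonpositive, so it suffices to show that $\bar g$ is a local \emph{minimum} of $\mathcal{F}$ on $\{\,g:\ R_g\ge R_{\bar g}\,\}$ within a $C^2$-neighborhood of $\bar g$ (for $\lambda<0$ both signs on the right reverse and one instead needs a local maximum, giving $\text{Vol}_M(g)\ge\text{Vol}_M(\bar g)$).

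Next I would apply Ebin's slice theorem to pull $g$ back by a diffeomorphism so that $h:=g-\bar g$ satisfies the divergence-free gauge $\delta_{\bar g}h=0$ and remains $C^2$-small; in this gauge $DR_{\bar g}(h)=-\bigl(\Delta_{\bar g}+(n-1)\lambda\bigr)(tr_{\bar g}h)$. Writing $h=h^{TT}+h_1$ with $h^{TT}\in S^{TT}_{2,\bar g}(M)$ and $h_1$ the trace part, the $L^2$-orthogonality of the Berger--Ebin decomposition together with the Einstein condition decouples the Hessian of $\mathcal{F}$ at $\bar g$ into a transverse-traceless part and a trace part. Combining $D\,\text{Vol}_{\bar g}(h)\le \tfrac{1}{4(n-1)\lambda}\int_M D^2R_{\bar g}(h,h)\,dv_{\bar g}+O(\|h\|^3)$ (from the Lagrange identity and $R_g\ge R_{\bar g}$) with $D^2\text{Vol}_{\bar g}(h^{TT},h^{TT})=-\tfrac12\|h^{TT}\|_{L^2(\bar g)}^2$, one gets on the transverse-traceless sector
\[
\text{Vol}_M(g)-\text{Vol}_M(\bar g)\ \le\ \frac{1}{4(n-1)\lambda}\int_M D^2R_{\bar g}(h^{TT},h^{TT})\,dv_{\bar g}-\frac14\|h^{TT}\|_{L^2(\bar g)}^2+O(\|h\|_{C^2}^3),
\]
and after an integration by parts the inequality $\int_M D^2R_{\bar g}(h^{TT},h^{TT})\,dv_{\bar g}\le (n-1)\lambda\|h^{TT}\|_{L^2(\bar g)}^2$ needed here is exactly the statement that $\Delta_E$ is negative on $S^{TT}_{2,\bar g}(M)\setminus\{0\}$ --- the strict stability hypothesis --- and, the inequality being strict, it comes with a uniform spectral gap that produces a genuinely negative quadratic term.

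It then remains to control the trace part, the scaling direction, and the gauge degeneracies. For a trace perturbation, integrating $R_g\ge R_{\bar g}$ against $dv_{\bar g}$ and inserting the Lagrange identity shows that, to second order, the mean of $tr_{\bar g}h$ is dominated by $\tfrac{1}{2(n-1)n\lambda\,\text{Vol}_M(\bar g)}\int_M D^2R_{\bar g}(h^{TT},h^{TT})\,dv_{\bar g}$, so (together with the transverse-traceless estimate) $\mathcal{F}$ cannot drop below $\mathcal{F}(\bar g)$; the operator $-\bigl(\Delta_{\bar g}+(n-1)\lambda\bigr)$ governing this step has, by the Lichnerowicz--Obata estimate $\lambda_1(-\Delta_{\bar g})\ge n\lambda$, exactly one negative eigenvalue (the constants) and is otherwise positive, its kernel being trivial unless $\lambda_1(-\Delta_{\bar g})=n\lambda$, in which case Obata's theorem forces $(M,\bar g)$ to be a round sphere and the extra directions are the conformal Killing deformations, along which $\mathcal{F}$ is \emph{constant}. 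The one-parameter scaling family $t\mapsto t\bar g$ is handled by a direct computation giving $\mathcal{F}$ a strict local minimum at $t=1$. Assembling these facts, absorbing the mixed terms between $h^{TT}$, $h_1$, and the gauge part (which are of lower order in $\|h\|_{C^2}$), yields $\mathcal{F}(g)\ge\mathcal{F}(\bar g)$, hence $\text{Vol}_M(g)\le\text{Vol}_M(\bar g)$, for all such $g$; if equality holds, tracing the inequalities backward forces $\int_M(R_g-R_{\bar g})\,dv_g=0$, $h^{TT}=0$, and $tr_{\bar g}h\equiv0$, so $h$ is tangent to the diffeomorphism orbit and $g$ is isometric to $\bar g$. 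The case $\lambda<0$ is identical after reversing signs. The step I expect to be the main obstacle is precisely this second-order trade-off along $S^{TT}_{2,\bar g}(M)$: the volume's own second variation $-\tfrac14\|h^{TT}\|_{L^2}^2$ must outweigh the contribution that the scalar curvature constraint feeds back into it through the induced conformal adjustment, and it is the strict negativity of the Einstein operator that closes this with room to spare --- the remaining delicate points being the uniformity of the cubic remainder over a full $C^2$-neighborhood and the conformal degeneracy that occurs only on the round sphere.
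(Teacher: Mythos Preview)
The paper does not prove this statement: it is quoted from Yuan \cite{Y} as a known result motivating the paper's own $\sigma_2$ analogue (Theorem~1.4). There is therefore no proof in the paper to compare your attempt against directly.

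That said, it is instructive to compare your outline with the paper's proof of its own Theorem~1.4, which is the structural analogue. The paper works with the \emph{scaling-invariant} functional
\[
\mathcal{H}_{\bar g}(g)=\text{Vol}(g)^{4/n}\int_M\sigma_2(g)\,dv_{\bar g}
\]
(note the fixed background volume form $dv_{\bar g}$), shows $\bar g$ is critical, and computes $D^2\mathcal{H}_{\bar g}$ explicitly; the result factors cleanly into a transverse-traceless piece controlled by $\Delta_E(\Delta_E-\tfrac{(n-2)^2}{2}\lambda)$ and a trace piece controlled by Lichnerowicz--Obata. The scaling invariance then lets them conclude $\varphi^*g=c^2\bar g$ at the end and read off the volume comparison by comparing $\sigma_2$ under rescaling. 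Your approach instead uses the Einstein--Hilbert functional with an explicit Lagrange multiplier and the algebraic identity expressing $\text{Vol}(g)-\text{Vol}(\bar g)$ in terms of $\mathcal{F}(g)-\mathcal{F}(\bar g)$ and $\int(R_g-R_{\bar g})\,dv_g$. The structural skeleton is the same (slice theorem, TT/trace decomposition, stability for the TT piece, Lichnerowicz--Obata for the trace piece), but because your $\mathcal{F}$ is not scaling-invariant you must handle the scaling direction by hand, whereas the paper's choice of functional absorbs it automatically and yields a cleaner endgame.

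Your outline is plausible but several steps are under-specified. The displayed inequality relating $\text{Vol}(g)-\text{Vol}(\bar g)$ to $\int D^2R_{\bar g}(h^{TT},h^{TT})$ and $\|h^{TT}\|_{L^2}^2$ is asserted without derivation, and the claim that the needed bound ``is exactly'' strict stability requires you to actually write out $D^2R_{\bar g}$ on TT tensors and match coefficients; as stated it is a hope, not a computation. The treatment of the trace part is also sketchy: you assert that the mean of $tr_{\bar g}h$ is dominated by the TT second variation, but this coupling goes through the constraint $R_g\ge R_{\bar g}$ in a way that needs to be made precise at second order. Finally, to pass from a nonpositive Hessian on the slice to a genuine local extremum you need the kind of argument the paper defers to \cite{LY2} (their Proposition~4.4), which handles the cubic remainder uniformly; your ``absorbing the mixed terms'' is where that work actually lives.
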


Instead of scalar curvature, we investigate the $\sigma_2$-curvature tensor, a fourth order curvature tensor which has been intensively studied for decades in dimension four and higher dimensions. For a closed 4-dimensional Riemannian manifold $(M^4, g)$, $\sigma_2$-curvature is defined to be:

\begin{equation*}
  \sigma_2 = - \frac{1}{2} |Ric_g|^2 + \frac{1}{6}R_g^2.
\end{equation*}
It satisfies the Gauss-Bonnet-Chern formula:
\begin{equation*}
  \int_{M^4} \bigg( \frac{1}{4}|W_g|^2 + \sigma_2\bigg)dv_g = 8\pi^2 \chi(M),
\end{equation*}
where $W_g$ is the Weyl curvature tensor for $(M^4, g)$. In particular, if $(M^4, g)$ is locally conformally flat, i.e. $W_g = 0$, it reduces to
\begin{equation*}
  \int_{M^4} \sigma_2 dv_g = 8\pi^2 \chi(M).
\end{equation*}
Also, by the conformal invariance of Weyl curvature tensor, $\sigma_2$ is also conformal invariant in four dimensional case.

In this article, we proved the following volume comparison theorem with respect to $\sigma_2$:
\begin{theorem}\label{theorem: main1}
  Let $(M^n,\bar{g})$ be a strictly stable Einstein manifold with
  \begin{align*}
    Ric_{\bar{g}}=(n-1)\lambda\bar{g}
  \end{align*}
  where $\lambda>0$ is a constant. Then there exists a constant $\varepsilon_0>0$ for any metric $g$ on $M^n$ satisfying
  \begin{align*}
    \sigma_2(g)\geq \sigma_2(\bar{g}), \ \ \ ||g-\bar{g}||_{C^2(M^2,\bar{g})}<\varepsilon_0,
  \end{align*}
  the following volume comparison holds:
  \begin{align*}
    \text{Vol}_{M}(g)\leq \text{Vol}_M(\bar{g}),
  \end{align*}
   with equality holds in either case if and only if $g$ is isometric to $\bar{g}$.
\end{theorem}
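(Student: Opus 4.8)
The plan is to follow the variational strategy of Yuan's scalar-curvature theorem, replacing the linearization of $R_g$ by that of $\sigma_2(g)$. First I would compute the linearization of the $\sigma_2$-curvature at the Einstein background $\bar g$. Since $\bar g$ is Einstein with $Ric_{\bar g}=(n-1)\lambda\bar g$, the Schouten tensor is $S_{\bar g}=c\,\bar g$ with $c=\tfrac{(n-2)\lambda}{2}$, so all eigenvalues coincide and $\sigma_2(\bar g)=\binom{n}{2}c^2$. Writing $g=\bar g+h$, I expect
\begin{align*}
D\sigma_2|_{\bar g}(h)=\alpha_n\big(\Delta\,\mathrm{tr}_{\bar g}h+\delta_{\bar g}\delta_{\bar g}h\big)+\beta_n\,\lambda\,\mathrm{tr}_{\bar g}h
\end{align*}
for explicit dimensional constants $\alpha_n>0$, $\beta_n$, because $\sigma_2$ is, up to the $|Ric|^2$ and $R^2$ terms above, a second-order operator whose principal symbol at an Einstein metric reduces to a multiple of the linearization of scalar curvature. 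The key structural point is that on the background this linearized operator, after restricting to the right slice, is (up to sign and the constant $\alpha_n$) the same operator $\Gamma_{\bar g}^*$-type map that appears in Yuan's argument, so strict stability of $\bar g$ will again give the needed invertibility.

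Next I would set up the same functional-analytic framework: decompose a nearby metric $g$ using a slice theorem so that $g=\varphi^*(\bar g+h)$ with $h$ in a complement to the orbit directions, and use the implicit function theorem together with the constraint $\sigma_2(g)\ge\sigma_2(\bar g)$ to reduce to controlling the volume along this family. Concretely, I would show that among metrics with $\sigma_2(g)\equiv\sigma_2(\bar g)$ and close to $\bar g$, the background $\bar g$ is a strict local maximum of the volume; the inequality case $\sigma_2(g)\ge\sigma_2(\bar g)$ then follows by a deformation/monotonicity argument (conformally or by a Yamabe-type flow in the $\sigma_2$ setting) pushing $g$ to a metric with $\sigma_2$ identically equal to $\sigma_2(\bar g)$ while not decreasing the volume, exactly as in the scalar case. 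For the second-order analysis one expands $\mathrm{Vol}(\bar g+h)$ and $\sigma_2(\bar g+h)$ to second order; after using the first-order constraint to eliminate $\mathrm{tr}_{\bar g}h$ and $\delta_{\bar g}h$ in terms of lower-order data, the second variation of volume on the constrained space should be controlled by $-\langle \Delta_E h^{TT},h^{TT}\rangle$ plus terms that are negative definite because $\lambda>0$, and here strict stability ($\Delta_E<0$ on $S^{TT}_{2,\bar g}\setminus\{0\}$) makes the quadratic form strictly negative, giving the strict local maximum and the rigidity statement.

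The main obstacle I anticipate is the second-order computation of $\sigma_2(\bar g+h)$: unlike scalar curvature, $\sigma_2$ is quadratic in the curvature, so its Hessian at $\bar g$ contains genuinely new quadratic-in-$h$ curvature terms (schematically $|D Ric|^2$-type and $Rm*h*h$ terms) beyond what appears for $R_g$, and one must check that, after imposing the linearized constraint and integrating by parts, these extra terms either vanish on the Einstein background or combine into something with a favorable sign. I expect this to work because at an Einstein metric the Weyl-tensor contributions decouple and the remaining terms are proportional to $\lambda^2\,(\mathrm{tr}\,h)^2$ and to $\langle\Delta_E h^{TT},h^{TT}\rangle$ with the correct signs, but verifying the precise constants (and checking $\alpha_n>0$ so that the constraint is non-degenerate for all $n\ge 3$) is the delicate part. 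A secondary technical point is justifying the reduction step for the inequality $\sigma_2(g)\ge\sigma_2(\bar g)$: one needs an existence result for deforming $g$ within its conformal class (or along a suitable flow) to normalize $\sigma_2$, which requires $g$ to lie in the positive $\sigma_2$-cone — this is where the closeness assumption $\|g-\bar g\|_{C^2}<\varepsilon_0$ and $\lambda>0$ are used, since $\bar g$ itself has $\sigma_2(\bar g)>0$ with Schouten tensor in the positive cone.
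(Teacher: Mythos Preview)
Your strategy differs substantially from the paper's. You propose a constrained-maximum argument: first show that $\bar g$ is a strict local maximum of volume on the level set $\{\sigma_2(g)=\sigma_2(\bar g)\}$, and then handle the inequality $\sigma_2(g)\ge\sigma_2(\bar g)$ by deforming $g$ (conformally or by a flow) back to that level set without losing volume. The paper instead introduces the single scaling-invariant functional
\[
\mathcal{H}_{\bar g}(g)=\mathrm{Vol}(g)^{4/n}\int_M\sigma_2(g)\,dv_{\bar g},
\]
shows that $\bar g$ is a critical point, and computes the second variation explicitly (Propositions~3.1--3.6). Using strict stability for the TT-part and the Lichnerowicz--Obata inequality for the trace part, one obtains $D^2\mathcal{H}_{\bar g}\le 0$ with kernel exactly $\mathbb{R}\bar g$ (or $(\mathbb{R}\oplus E_{n\lambda})\bar g$ on the sphere), so $\bar g$ is a strict local maximum of $\mathcal{H}_{\bar g}$ on the slice modulo rescaling (Proposition~4.4). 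The endgame is then elementary: if $\sigma_2(g)\ge\sigma_2(\bar g)$ and, for contradiction, $\mathrm{Vol}(g)\ge\mathrm{Vol}(\bar g)$, then $\mathcal{H}_{\bar g}(\varphi^*g)\ge\mathcal{H}_{\bar g}(\bar g)$, forcing $\varphi^*g=c^2\bar g$, whence $c\ge 1$ from volume and $c\le 1$ from $\sigma_2$, so $c=1$.

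The main weakness in your proposal is step~(b), the reduction from $\sigma_2(g)\ge\sigma_2(\bar g)$ to equality. For scalar curvature this is a Yamabe-type normalization, but for $\sigma_2$ it is a fully nonlinear prescribed-curvature problem, and even granting solvability near $\bar g$ (via the positive cone condition you mention) you still need that volume is \emph{nondecreasing} along the deformation; this monotonicity is not automatic and you do not indicate how to obtain it. The paper's functional $\mathcal{H}_{\bar g}$ sidesteps this completely: no deformation is needed because the product structure of $\mathcal{H}_{\bar g}$ converts the two inequalities directly into $\mathcal{H}_{\bar g}\ge$. A second point is that your sketch of the second variation is too optimistic: because $\sigma_2$ is quadratic in curvature, the Hessian is genuinely fourth order in $h$, and the paper's key identity (Proposition~3.6) factors it as $-\tfrac14\int_M\Delta_E(\Delta_E-\tfrac{(n-2)^2}{2}\lambda)\overset{\circ}{h}\cdot\overset{\circ}{h}$ on the TT-part plus a Lichnerowicz--Obata term $-c_n\lambda\int_M(|\nabla u|^2-n\lambda(u-\bar u)^2)$ on the trace part. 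Both ingredients, the fourth-order factorization and the Obata eigenvalue estimate, are absent from your outline but essential for the sign.
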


\begin{remark}
  The strictly stable condition is necessary, otherwise we could construct counterexamples.
\end{remark}

\begin{remark}
  Our theorem is slightly different from Yuan's results. In our theorem, the Einstein manifolds must have positive constant scalar curvature.
\end{remark}

\begin{remark}
  In dimension $n = 4$, we have a global volume comparison which is a direct consequence from Gauss-Bonnet-Chern formula. Suppose that $(M^4, \bg)$ is a closed locally conformally flat Riemannian manifold with positive constant $\sigma_2$-curvature, then for any metric $g$ satisfies $\sigma_2(g) \geq \sigma_2(\bg)$ pointwisely on $M$, we have
  \begin{align*}
    Vol_M(g) \leq Vol_M(\bg) - \frac{1}{4\sigma_2(\bg)} || W_g ||_{L^2(M, g)}^2 \leq Vol_M(\bg)
  \end{align*}
  where equality holds if and only if $g$ is also locally conformally flat.
\end{remark}

\begin{remark}
  The volume comparison for Ricci-flat manifolds can not be expected. This is easy to see by taking $g = c^2 \bar{g}$ for some non zero constant $c$. Clearly, in this case, $\sigma_2(g) = \sigma_2(\bar{g}) = 0$, but the volume $\text{Vol}_{M}(g)$ can be either larger or smaller than $\text{Vol}_M(\bar{g})$ depending on $c$ is either greater or smaller than 1.
\end{remark}

However in this case we hold the rigidity as follows.

\begin{theorem}\label{theorem: main2}
  Suppose $(M^n,\bar{g})$ is a strictly stable Ricci-flat manifold, then there exists a constant $\varepsilon_0>0$ for any metric $g$ on $M^n$ satisfying
  \begin{align*}
    \sigma_2(g)\geq 0, \ \ \ ||g-\bar{g}||_{C^2(M^2,\bar{g})}<\varepsilon_0,
  \end{align*}
  then $g$ is Ricci-flat.
\end{theorem}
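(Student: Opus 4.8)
The plan is to adapt the strategy behind the $\lambda>0$ case above to the degenerate situation $\lambda=0$, in which the Ricci-flat background $\bar g$ lies on the boundary locus $\{\sigma_2=0\}$. Since $Ric_{\bar g}=0$ we have $R_{\bar g}=0$, hence $\sigma_2(\bar g)=-\tfrac12|Ric_{\bar g}|^2+\tfrac{n}{8(n-1)}R_{\bar g}^2=0$; moreover $\sigma_2=-\tfrac12|Ric|^2+\tfrac{n}{8(n-1)}R^2$ is quadratic in $(Ric,R)$ with both factors vanishing at $\bar g$, so the linearization $D\sigma_2|_{\bar g}$ vanishes identically. (When $\lambda\neq 0$ one finds instead that $D\sigma_2|_{\bar g}$ is a nonzero multiple of $DR_{\bar g}$, which is why the positive case runs parallel to the scalar-curvature theorem; here we must go to second order.) First I would apply the Ebin slice theorem: after pulling $g$ back by a diffeomorphism — which changes neither $\sigma_2(g)$, nor the Ricci-flatness to be proved, nor, up to a fixed constant, the $C^2$-smallness of $g-\bar g$ — we may assume $g=\bar g+h$ with $\delta_{\bar g}h=0$ and $\|h\|_{C^2(M,\bar g)}<\varepsilon_0$. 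Because $D\sigma_2|_{\bar g}=0$, Taylor expansion yields the pointwise identity
\[
\sigma_2(g)=-\tfrac12\bigl|DRic_{\bar g}(h)\bigr|^2+\tfrac{n}{8(n-1)}\bigl(DR_{\bar g}(h)\bigr)^2+\mathcal E(h),\qquad |\mathcal E(h)|\le C\,\|h\|_{C^2}\bigl(|h|^2+|\nabla h|^2+|\nabla^2h|^2\bigr).
\]

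Next I would integrate, replace $dv_g$ by $dv_{\bar g}$ at the cost of a further cubic error, and analyze the quadratic form $Q(h)=\int_M\bigl(-\tfrac12|DRic_{\bar g}(h)|^2+\tfrac{n}{8(n-1)}(DR_{\bar g}(h))^2\bigr)\,dv_{\bar g}$ via the $L^2(\bar g)$-orthogonal splitting $h=h^{TT}+k$, with $h^{TT}\in S^{TT}_{2,\bar g}(M)$ and $k\in\ker\delta_{\bar g}\cap\bigl(S^{TT}_{2,\bar g}(M)\bigr)^{\perp}$. On the transverse-traceless part $DR_{\bar g}(h^{TT})=0$ and $DRic_{\bar g}(h^{TT})=-\tfrac12\Delta_E h^{TT}$, which is again transverse-traceless; on the remainder $k$, using $DRic_{\bar g}(L_X\bar g)=L_X Ric_{\bar g}=0$ together with $\delta_{\bar g}k=0$ one obtains $DRic_{\bar g}(k)=DRic_{\bar g}(\psi\bar g)$ for a function $\psi=\psi(k)$, and each term of $DRic_{\bar g}(\psi\bar g)=-\tfrac{n-2}{2}\nabla^2\psi-\tfrac12(\Delta\psi)\bar g$ is $L^2$-orthogonal to transverse-traceless tensors. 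Hence the cross terms drop and $Q(h)=-\tfrac18\int_M|\Delta_E h^{TT}|^2\,dv_{\bar g}+Q_k(k)$, where a direct computation — using the conformal transformation law for the Ricci tensor and the Bochner identity $\int_M|\nabla^2\psi|^2=\int_M(\Delta\psi)^2$, valid precisely because $\bar g$ is Ricci-flat — shows that the two pieces of $Q_k$ cancel exactly: $Q_k\equiv 0$, a reflection of $\sigma_2(\bar g)=0$. By strict stability $\Delta_E$ is injective, hence an isomorphism, on $S^{TT}_{2,\bar g}(M)$, so $\int_M|\Delta_E h^{TT}|^2\,dv_{\bar g}\ge c\,\|h^{TT}\|_{H^2}^2$. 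Combining with $\int_M\sigma_2(g)\,dv_g\ge 0$ and the bound on $\mathcal E$ gives
\[
0\le\int_M\sigma_2(g)\,dv_g\le-\tfrac{c}{8}\|h^{TT}\|_{H^2}^2+C\varepsilon_0\bigl(\|h^{TT}\|_{H^2}^2+\|k\|_{H^2}^2\bigr),
\]
so for $\varepsilon_0$ small $\|h^{TT}\|_{H^2}\le C'\sqrt{\varepsilon_0}\,\|k\|_{H^2}$: the transverse-traceless part of $h$ is negligible, and $g$ is, modulo a diffeomorphism and a lower-order correction, a purely conformal/trace deformation $\bar g+k$ of $\bar g$.

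The hard part will be upgrading this to $Ric_g=0$: along exactly the conformal/trace directions the second variation of $\sigma_2$ is degenerate ($Q_k\equiv 0$), so the conclusion is invisible to the quadratic form and must be extracted from the \emph{pointwise} hypothesis. My plan is to use $|Ric_g|^2=|\mathring{Ric}_g|^2+\tfrac1n R_g^2$, which rewrites $\sigma_2(g)\ge 0$ as $|\mathring{Ric}_g|^2\le\tfrac{(n-2)^2}{4n(n-1)}R_g^2$; in particular it suffices to prove $R_g\equiv 0$, since then $\mathring{Ric}_g\equiv 0$ as well. As $g$ is, up to the negligible correction, conformal to the Ricci-flat $\bar g$ — write $g=u^{4/(n-2)}\bar g$ with $u$ near a constant — one has $\int_M R_g\,dv_g=\tfrac{4(n-1)}{n-2}\int_M|\nabla u|^2\,dv_{\bar g}\ge 0$, with equality iff $u$ is constant; feeding this, the pointwise bound on $\mathring{Ric}_g$, and the Ricci-flat Bochner formula back into the expansion of $\int_M\sigma_2(g)\,dv_g$ should force $u$ constant, hence $R_g\equiv 0$ and $Ric_g\equiv 0$. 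Turning this last step into a rigorous argument — controlling the lower-order conformal corrections precisely, or instead running a blow-up/contradiction argument along a putative sequence of non-Ricci-flat metrics and closing it with the second-variation computation above — is the principal technical obstacle; the rigidity in the equality case will come out of the same analysis.
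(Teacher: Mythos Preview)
Your second-order analysis is correct and matches the paper's: at $\lambda=0$ the integrated second variation of $\sigma_2$ along the slice is $-\tfrac14\int_M|\Delta_E\mathring h|^2\,dv_{\bar g}$ (your $\tfrac18$ differs only by the usual factor of $2$ between the Hessian and the Taylor quadratic form), and the conformal piece contributes exactly zero, by the same Bochner cancellation you found. Strict stability then controls the transverse--traceless part as you say. One technical slip: in the gauge $\delta_{\bar g}h=0$ the orthogonal complement of $S^{TT}_{2,\bar g}(M)$ inside $\ker\delta_{\bar g}$ is \emph{not} $C^\infty(M)\cdot\bar g$, and your reduction ``$DRic_{\bar g}(k)=DRic_{\bar g}(\psi\bar g)$ because $DRic_{\bar g}(L_X\bar g)=0$'' is not justified, since Lie derivatives $L_X\bar g$ are generically not divergence-free and so do not enter your decomposition. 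The paper avoids this by working in the gauge $\delta_{\bar g}\mathring h=0$, which makes $h=\mathring h+\tfrac{u}{n}\bar g$ with $\mathring h\in S^{TT}_{2,\bar g}(M)$ an honest splitting.

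The part you flag as the ``principal technical obstacle'' is a genuine gap, and your sketch does not close it. Along conformal directions the second variation vanishes identically, so no absorb-the-cubic-error argument can force the conformal factor to be constant; one needs either a higher-order expansion or a structural argument. Your proposed route via $|\mathring{Ric}_g|^2\le c_nR_g^2$ and an approximate Yamabe identity is suggestive but not a proof: for $g=e^{2f}\bar g$ with $\bar g$ Ricci-flat, $\sigma_2(g)\ge 0$ is a fully nonlinear pointwise condition on $\nabla^2 f$, and nothing you wrote excludes small nonconstant $f$. The paper does not attempt this direct attack. Instead it packages the problem into the scaling-invariant functional $\mathcal H_{\bar g}(g)=\mathrm{Vol}(g)^{4/n}\!\int_M\sigma_2(g)\,dv_{\bar g}$, observes that $\sigma_2(g)\ge 0$ gives $\mathcal H_{\bar g}(\varphi^*g)\ge 0=\mathcal H_{\bar g}(\bar g)$ on the slice, and then invokes the local-maximum rigidity statement of Proposition~4.4 (whose proof is deferred to Lin--Yuan \cite{LY2}) to conclude $\varphi^*g=c^2\bar g$, hence Ricci-flat. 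That cited result is precisely where the degenerate conformal directions get handled; it is the missing ingredient in your outline.
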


\begin{remark}
  Since $\sigma_2$-curvature vanishes on Ricci-flat metric, Theorem \ref{theorem: main2} states that there is no metric with positive $\sigma_2$-curvature near $\bar{g}$.
\end{remark}

\begin{remark}
  When we finished our work, we found the paper \cite{ACS} also cover the volume comparison theorem with respect to $\sigma_2$-curvature with a different approach.
\end{remark}

This paper is organized as follows. In Sect.\ref{sec:preliminary}, we give some notations and preliminaries that will be useful throughout this paper. In Sect.\ref{sec:functional}, we will calculate the first and second variation of the given functional. In Sect.\ref{sec:main}, we will prove the main theorem and construct a counterexample to show the necessity of the stable Einstein condition.

\vskip 0.1 in
{\bf Acknowledgements.} The authors are grateful to Professor Wei Yuan for useful discussions and comments on this work.

\section{Preliminary}\label{sec:preliminary}

\subsection{Notations}

Throughout this paper, we will always assume that $(M^n, g)$ to be an n dimensional closed Riemannian manifold with $(n \geq 3)$ unless otherwise stated. Also, we list the curvature indices convention we used in this article.

Given a Riemannian manifold $(M^n,g)$, We adopt the following convention for Riemann and Ricci curvature tensors
\begin{align*}
  Rm(X,Y,Z,W) = \langle R(X, Y)Z, W \rangle,
\end{align*}
where
\begin{align*}
  R(X, Y)Z = \nabla_X \nabla_Y Z - \nabla_Y \nabla_X Z - \nabla_{[X, Y]}Z.
\end{align*}
Thus we define
\begin{align*}
  R_{ijkl} &= Rm(\partial_i, \partial_j, \partial_k, \partial_l) = g_{ml}R_{ijk}^m\\
  R_{jk} &= g^{il}R_{ijkl},
\end{align*}
and denote the Schouten tensor as
\begin{align*}
  S_{ij}=R_{ij}-\frac{R}{2(n-1)}g_{ij}.
\end{align*}
Then the $\sigma_k$-curvatures are defined as the $k$th elementary symmetric polynomials
\begin{equation*}
  \sigma_k(\lambda_1,\cdots,\lambda_n)=\sum\limits_{i_1<i_2<\cdots<i_k}\lambda_{i_1}\lambda_{i_2}\cdots\lambda_{i_k},
\end{equation*}
where $\lambda_i(i=1,\cdots,n)$ are the eigenvalues of the Schouten tensor $S_g$. Easy to see that
\begin{equation*}
  \sigma_1(g)=tr(S_g)=\frac{n-2}{2(n-1)}R,
\end{equation*}
thus $\sigma_k$-curvatures are often considered as a canonical way to generalize the scalar curvature. By simple calculations, we get
\begin{equation*}
  \sigma_2(g)=-\frac{1}{2}|Ric_g|^2+\frac{n}{8(n-1)}R^2_g.
\end{equation*}
We also denote the Weyl tensor as
\begin{align*}
  W_{ijkl}=R_{ijkl}-\frac{1}{n-2}(S_{il}g_{jk}+S_{jk}g_{il}-S_{ik}g_{jl}-S_{jl}g_{ik}).
\end{align*}

In the case that $(M^n,\bar{g})$ is an Einstein manifold that satisfies $Ric_{\bar{g}}=(n-1)\lambda \bar{g}$, we hold the following:
\begin{align*}
  S_{ij}=\frac{n-2}{2}\lambda \bar{g}_{ij}, \quad\quad W_{ijkl}=R_{ijkl}-\lambda(\bar{g}_{il}\bar{g}_{jk}-\bar{g}_{ik}\bar{g}_{jl}),\\
  R_{\bar{g}}=n(n-1)\lambda,\quad\quad  \sigma_2(\bar{g})=\frac{n(n-1)(n-2)^2}{8}\lambda^2.
\end{align*}

For Laplacian operator, we use the following convention:
\begin{align*}
  \Delta_g = g^{ij}\nabla_i \nabla_j.
\end{align*}

For simplicity, we introduce following operators:
\begin{align*}
  (h\times k)_{ij}=g^{kl}h_{ik}h_{jl},\quad \quad h \cdot k=tr(h\times k)=g^{ij}g^{kl}h_{ik}k_{jl},
\end{align*}
and
\begin{align*}
  (Rm \cdot h)_{jk}=R_{ijkl}h^{il},\quad \quad (W \cdot h)_{jk}=W_{ijkl}h^{il}.
\end{align*}
Then the Einstein operator acting on symmetric 2-tensor $h \in S_2(M)$ is defined by
\begin{align*}
  \Delta_E h = \Delta h + 2 Rm \cdot h.
\end{align*}

\subsection{Basic variational formulae}
We list several formulae for linearization of curvature tensor which will show later sections.

The linearization of Ricci tensor is
\begin{align*}
  (DRic)\cdot h=-\frac{1}{2}[\Delta_E h-Ric\times h-h\times Ric+\nabla^2(trh)+\nabla_j(\delta h)_k+\nabla_k(\delta h)_j],
\end{align*}
  and the linearization of scalar curvature is
\begin{align*}
  (DR)\cdot h=-\Delta(trh)+\delta^2h-Ric\cdot h.
\end{align*}

The first and second variations of the volume functional are
\begin{align*}
  (D\text{Vol})\cdot h=\frac{1}{2}\int_M(trh)dv_g,
\end{align*}
and
\begin{align*}
  (D^2\text{Vol})(h,h)=\frac{1}{4}\int_M\Big((trh)^2-2|h|^2\Big)dv_g.
\end{align*}

\section{The Key Functional and Corresponding Variations}\label{sec:functional}

In this section, we will present the variation formulae for functional $\mathcal{H}_{\bar{g}}$ defined below. Throughout this section, $\bar{g}$ will be an Einstein metric, that is
\begin{equation*}
  Ric_{\bar{g}}=(n-1) \lambda \bar{g}.
\end{equation*}

Suppose $g$ is another metric near $\bar{g}$, then by theorem there is a diffeomorphism $\phi\in\mathcal{D}(M)$ such that $h:=\phi^*g-\bar{g}$ satisfies the TT-gauge condition
\begin{equation*}
\delta_{\bar{g}}(\hh)=\delta_{\bar{g}}(h-\frac{trh}{n}g)=0.
\end{equation*}
Later we denote $trh$ as $u$, it is clear that we have the following
\begin{equation*}
(\delta h)_k=-\nabla^j h_{jk}=-\frac{1}{n}\nabla_k u, \quad |h|^2=|\hh|^2+\frac{1}{n}u^2, \quad |\nabla h|^2=|\nabla \hh|^2+\frac{1}{n}|\nabla u|^2.
\end{equation*}

Suppose $(M^n, \bar{g})$ is a closed Einstein manifold, take the functional
\begin{equation*}
\mathcal{H}_{\bar{g}}(g)=\text{Vol}(g)^{\frac{4}{n}}\int_M\sigma_2(g)dv_{\bar{g}}.
\end{equation*}
Since the volume form $dv_{\bar{g}}$ is independent of $g$, it is easy to check that $\mathcal{H}_{\bar{g}}(g)$ is a scaling invariant with respect to metric $g$, that is
\begin{align*}
  \mathcal{H}_{\bar{g}}(c^2 g) = c^2\mathcal{H}_{\bar{g}}(g)
\end{align*}
for any non-zero constant $c$. Such volume form fixed functional has been studied by Fisher-Marsden \cite{FM} and Yuan \cite{Y}\cite{LY2}, where the functional was used to study rigidity theorem of scalar curvature and Q curvature and corresponding volume comparison theorems. The functional $\mathcal{H}_{\bar{g}}(g)$ we introduce here is designed for our proof of volume comparison and we will present its first and second variation formulae in this section.

\begin{proposition}\label{prop:einstein_critical}
  The Einstein metric $\bar{g}$ is the critical point of $\mathcal{H}_{\bar{g}}$.
\end{proposition}

\begin{proof}
  Let $g_t=\bar{g}+th$, where $h\in S_2(M)$, $t\in(-\varepsilon,\varepsilon)$ for some small $\varepsilon>0$. Then the linearization of the functional $\mathcal{H}_{\bar{g}}$ at $\bar{g}$ is
  \begin{equation*}
    \begin{split}
      D\mathcal{H}_{\bar{g}}\cdot h
      =&\frac{d}{dt}\bigg|_{t=0}\mathcal{H}_{\bar{g}}(g_t)\\
      =&\text{Vol}(\bar{g})^{\frac{4}{n}}\int_MD\sigma_2(\bar{g})\cdot h dv_{\bar{g}}
      +\frac{4}{n}\text{Vol}(\bar{g})^{\frac{4}{n}-1}D\text{Vol}(\bar{g})\cdot h\int_M\sigma_2(\bar{g})dv_{\bar{g}}.
    \end{split}
  \end{equation*}
  Due to the linearization of Ricci tensor and the scalar curvature, we obtain the following:
  \begin{align*}
    (D\sigma_2)\cdot h&=-\frac{1}{2}(|Ric|^2)'+\frac{n}{8(n-1)}(R^2)'\\
    &=-(g^{ik})'g^{jk}R_{ij}R_{kl}-g^{ik}g^{jl}(R'_{ij})R_{kl}+\frac{n}{4(n-1)}RR'.
  \end{align*}
  Notice that in the case $Ric=(n-1)\lambda g$, we obtain $R=n(n-1)\lambda$ and
  \begin{align*}
    g^{ij}R_{ij}'=-\frac{n-1}{n}\Delta u,\quad\quad R'=(1-n)(\frac{1}{n}\Delta u+\lambda u),
  \end{align*}
  thus
  \begin{align*}
    (D\sigma_2)\cdot h=
    &(n-1)^2\lambda^2 u-\frac{(n-1)^2}{n}\lambda\Delta u+\frac{n^2(1-n)\lambda}{4}(\frac{1}{n}\Delta u+\lambda u)\\
    =&-\frac{(n-1)(n-2)^2}{4}\lambda^2 u+c(n)\lambda \Delta u,
  \end{align*}
  which implies
  \begin{align*}
    \int_M (D\sigma_2)\cdot h\dvg=-\frac{(n-1)(n-2)^2}{4}\lambda^2\int_M u \dvg,
  \end{align*}
  and
  \begin{align*}
    D\mathcal{H}_{\bar{g}}\cdot h
    &=-\frac{(n-1)(n-2)^2}{4}\lambda^2\text{Vol}(\bar{g})^{\frac{4}{n}}\int_M u \dvg\\
    &+\frac{4}{n}\text{Vol}(\bar{g})^{\frac{4}{n}-1}\cdot \frac{n(n-1)(n-2)^2}{8}\lambda^2\text{Vol}(\bar{g})\cdot\frac{1}{2}\int_M u\dvg\\
    &=0.
  \end{align*}
  Thus $\bar{g}$ is the critical point of $\funchg(g)$.
\end{proof}

In the following, we will calculate the second order variation formula of the functional $\mathcal{H}_{\bar{g}}$. With a standard method, we have
\begin{equation*}
  \begin{split}
    D^2\mathcal{H}_{\bar{g}}(h,h)
    =&\text{Vol}(\bar{g})^{\frac{4}{n}}\int_MD^2\sigma_2(\bar{g})(h,h)dv_{\bar{g}}+\frac{8}{n}\text{Vol}(\bar{g})^{\frac{4}{n}-1}(D\text{Vol}(\bar{g})\cdot h)\int_MD\sigma_2(\bar{g})\cdot hdv_{\bar{g}}\\
    &+\frac{4}{n}\Big[\text{Vol}(\bar{g})^{\frac{4}{n}-1}D^2\text{Vol}(\bar{g})(h,h)+\frac{4-n}{n}\text{Vol}(\bar{g})^{\frac{4}{n}-2}(D\text{Vol}(\bar{g})\cdot h )^2\Big]\int_M\sigma_2(\bar{g})dv_{\bar{g}}.
  \end{split}
\end{equation*}
Note that since $\bar{g}$ is assumed to be Einstein, we have the following results that can help us minimize the work. The first and second variation formulae reduce to:
\begin{align*}
  D\text{Vol}(\bar{g})\cdot h=\frac{1}{2}\int_M u dv_{\bar{g}},
\end{align*}
and
\begin{align*}
  D^2\text{Vol}(\bar{g})(h,h)&=\frac{1}{4}\int_M u^2-2|h|^2dv_{\bar{g}}=\frac{n-2}{4n}\int_M u^2 dv_{\bar{g}}-\frac{1}{2}\int_M |\hh|^2dv_{\bar{g}}.
\end{align*}
The integrals involved with $\sigma_2$ and its first variation will be
\begin{align*}
  \int_M\sigma_2(\bar{g})dv_{\bar{g}}&=\frac{n(n-1)(n-2)^2}{8}\text{Vol}(\bar{g}),
\end{align*}
and
\begin{align*}
    \int_M D\sigma_2(\bar{g})\cdot hdv_{\bar{g}}&=\int_M\Lambda_{\bar{g}}(h)dv_{\bar{g}}=\int_M \Lambda^*(1)\cdot h dv_{\bar{g}}=-\frac{(n-1)(n-2)^2}{4}\int_M udv_{\bar{g}}
\end{align*}
And one can check that
\begin{align*}
  (\int_M u dv_{\bar{g}})^2=\text{Vol}(\bar{g})\int_M \bar{u}^2dv_{\bar{g}},
\end{align*}
here
\begin{align*}
  \bar{u}=\text{Vol}(\bar{g})^{-1}\int_M u dv_{\bar{g}},
\end{align*}
thus we have:
\begin{equation*}
  \begin{split}
    \text{Vol}(\bar{g})^{-\frac{4}{n}}D^2\mathcal{H}_{\bar{g}}(h,h)=&\int_MD^2\sigma_2(\bar{g})(h,h)dv_{\bar{g}}-\frac{(n-1)(n-2)^2}{4}\int_M |\hh|^2dv_{\bar{g}}\\
    &+\frac{(n-1)(n-2)^3}{8n}\int_M u^2dv_{\bar{g}}-\frac{(n-1)(n-2)^2(n+4)}{8n}\int_M \bar{u}^2dv_{\bar{g}}.
  \end{split}
\end{equation*}

Now focus on the calculation of $D^2\sigma_2(\bar{g})(h,h)$, note that in our case $R_{ij}=(n-1)\lambda g_{ij}$.
\begin{equation*}
  \begin{split}
    D^2\sigma_2(\bar{g})(h,h)=&-\frac{1}{2}(|Ric|^2)''+\frac{n}{8(n-1)}(R^2)''\\
    =&-\frac{1}{2}(g^{ik}g^{jl}R_{ij}R_{kl})''+\frac{n}{4(n-1)}(R\cdot R')'\\
    =&-((g^{ik})'g^{jl}R_{ij}R_{kl})'-(g^{ik}g^{jl}(R_{ij})'R_{kl})'+\frac{n}{4(n-1)}(R')^2+\frac{n^2}{4}R''\\
    =&-(n-1)^2\lambda^2\bar{g}_{ik}(g^{ik})''-(n-1)^2\lambda^2\bar{g}_{ij}\bar{g}_{kl}(g^{ik})'(g^{jl})'-4(n-1)\lambda(g^{ik})'(R_{ik})'\\
    &-\bar{g}^{ik}\bar{g}^{jl}(R_{ij})'(R_{kl})'-(n-1)\lambda\bar{g}^{ij}R''_{ij}+\frac{n}{4(n-1)}(R')^2+\frac{n^2}{4}\lambda R''\\
    =&-3(n-1)^2\lambda^2|h|^2+4(n-1)\lambda h^{ij}R'_{ij}-|Ric'|^2-(n-1)\lambda \bar{g}^{ij}R''_{ij}\\
    &+\frac{n}{4(n-1)}(R')^2+\frac{n^2}{4}\lambda R''
  \end{split}
\end{equation*}
Notice that
\begin{align*}
  R''=(g^{ij})''R_{ij}+2(g^{ij})'R'_{ij}+\bar{g}^{ij}R''_{ij}=2(n-1)\lambda|h|^2-2h^{ij}R'_{ij}+\bar{g}^{ij}R''_{ij},
\end{align*}
thus
\begin{equation*}
  \begin{split}
    D^2\sigma_2(\bar{g})(h,h)=&\frac{(n-1)(n^2-6n+6)}{2}\lambda^2|h|^2+\frac{-n^2+8n-8}{2}\lambda h^{ij}R'_{ij}-|Ric'|^2\\
    &+\frac{(n-2)^2}{4}\lambda \bar{g}^{ij}R''_{ij}+\frac{n}{4(n-1)}(R')^2
  \end{split}
\end{equation*}

With above calculations, we can express the integrals in a compact way. We split the result in following porpositions.

\begin{proposition}\label{prop:3.2}
  \begin{equation*}
    \int_M h^{ij}R'_{ij} dv_{\bar{g}}=-\frac{1}{2}\int_M \hh \cdot \Delta_L \hh dv_{\bar{g}}+\frac{n-1}{n^2}\int_M|\nabla u|^2 dv_{\bar{g}}.
  \end{equation*}
\end{proposition}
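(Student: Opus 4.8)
The plan is to reduce the integrand $h^{ij}R'_{ij}$ on the Einstein background $\bar{g}$ to an expression in $\Delta_E h$, $|h|^2$ and a Hessian-of-$u$ term, then peel off the pure-trace part of $h$ and integrate by parts.

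First I would substitute the TT-gauge relation $(\delta h)_k=-\tfrac1n\nabla_k u$ together with $Ric_{\bar{g}}=(n-1)\lambda\bar{g}$ into the linearization formula for the Ricci tensor. Since $Ric\times h=h\times Ric=(n-1)\lambda h$ and $\nabla_j(\delta h)_k+\nabla_k(\delta h)_j=-\tfrac2n\nabla_j\nabla_k u$, this collapses to
\[
R'_{ij}=-\frac12\Bigl[(\Delta_E h)_{ij}-2(n-1)\lambda\,h_{ij}+\frac{n-2}{n}\nabla_i\nabla_j u\Bigr].
\]
Contracting with $h^{ij}$, integrating over $M$, and integrating by parts once using $\nabla^i h_{ij}=\tfrac1n\nabla_j u$ to replace $\int_M h^{ij}\nabla_i\nabla_j u\,\dvg$ by $-\tfrac1n\int_M|\nabla u|^2\,\dvg$, one obtains
\[
\int_M h^{ij}R'_{ij}\,\dvg=-\frac12\int_M\bigl(h\cdot\Delta_E h-2(n-1)\lambda|h|^2\bigr)\,\dvg+\frac{n-2}{2n^2}\int_M|\nabla u|^2\,\dvg.
\]

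Next I would write $h=\hh+\tfrac un\bar{g}$. As $\bar{g}$ is parallel and Einstein, $\Delta_E\bigl(\tfrac un\bar{g}\bigr)=\tfrac1n\bigl(\Delta u+2(n-1)\lambda u\bigr)\bar{g}$; and since $\mathrm{tr}_{\bar{g}}\hh=0$ while $\mathrm{tr}_{\bar{g}}(\Delta_E\hh)=2R_{il}\hh^{il}=2(n-1)\lambda\,\mathrm{tr}_{\bar{g}}\hh=0$, all cross terms in $h\cdot\Delta_E h$ drop out, leaving $h\cdot\Delta_E h=\hh\cdot\Delta_E\hh+\tfrac un\bigl(\Delta u+2(n-1)\lambda u\bigr)$. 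Combining this with $|h|^2=|\hh|^2+\tfrac1n u^2$, the terms proportional to $\lambda u^2$ cancel exactly, and using $\int_M u\,\Delta u\,\dvg=-\int_M|\nabla u|^2\,\dvg$ we get
\[
\int_M\bigl(h\cdot\Delta_E h-2(n-1)\lambda|h|^2\bigr)\,\dvg=\int_M\bigl(\hh\cdot\Delta_E\hh-2(n-1)\lambda|\hh|^2\bigr)\,\dvg-\frac1n\int_M|\nabla u|^2\,\dvg.
\]
Recalling that $\Delta_E h-Ric\times h-h\times Ric$ is the Lichnerowicz Laplacian $\Delta_L h$, which on our Einstein background equals $\Delta_E h-2(n-1)\lambda h$, the first integrand on the right is $\hh\cdot\Delta_L\hh$. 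Substituting back, the total coefficient of $\int_M|\nabla u|^2\,\dvg$ becomes $\tfrac12\bigl(\tfrac1n+\tfrac{n-2}{n^2}\bigr)=\tfrac{n-1}{n^2}$, which is exactly the asserted identity.

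The argument is mostly bookkeeping, and I expect the only delicate points to be: getting the reduction $\nabla_j(\delta h)_k+\nabla_k(\delta h)_j=-\tfrac2n\nabla_j\nabla_k u$ right (it uses nothing but the TT-gauge), and verifying that the trace-free block $\hh$ stays trace-orthogonal to the conformal part $\tfrac un\bar{g}$ after applying $\Delta_E$, i.e. $\mathrm{tr}_{\bar{g}}(\Delta_E\hh)=0$, so that no spurious $u$-terms creep in. The one structural point that makes the identity clean is the exact cancellation of the $\lambda u^2$ contributions, which forces the final answer to depend only on $\hh\cdot\Delta_L\hh$ and $|\nabla u|^2$ with no zeroth-order term in $u$.
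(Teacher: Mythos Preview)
Your argument is correct and follows essentially the same route as the paper: both compute $R'_{jk}$ on the Einstein background using the TT-gauge, split $h=\hh+\tfrac{u}{n}\bar g$, and integrate by parts to arrive at the stated identity. The only cosmetic difference is that the paper works with $\Delta_L$ from the outset and decomposes $R'$ into its trace-free and trace parts \emph{before} contracting with $h$, whereas you contract with the full $h$ first (handling $\int h^{ij}\nabla_i\nabla_j u$ via $\nabla^i h_{ij}=\tfrac1n\nabla_j u$) and only identify $\Delta_L=\Delta_E-2(n-1)\lambda$ at the end; both orderings yield the same cancellations.
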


\begin{proof}
  First, we denote
  \begin{equation*}
    \Delta_L h_{jk}:=\Delta h_{jk}+2R_{ijkl}h^{il}-R_{jp}h^p_k-R_{pk}h^p_j=\Delta_E h_{jk}-R_{jp}h^p_k-R_{pk}h^p_j
  \end{equation*}
  $\Delta_L$ is the Lichnerowicz Laplacian. Recall that in our case $R_{ij}=(n-1)\lambda \bar{g}_{ij}$, thus $R_{ijkl}=W_{ijkl}+\lambda(\bar{g}_{il}\bar{g}_{jk}-\bar{g}_{ik}\bar{g}_{jl})$, we have
  \begin{equation*}
    \Delta_L h_{jk}=\Delta h_{jk}+2W_{ijkl}h^{il}+2\lambda(tr_{\bar{g}}h)\bar{g}_{jk}-2n\lambda h_{jk}.
  \end{equation*}
  Thus
  \begin{equation*}
    \begin{split}
      \Delta_L \hh_{jk}&=\Delta \hh_{jk}+2W_{ijkl}\hh^{il}-2n\lambda\hh_{jk}\\
      \Delta_L h_{jk}&=\Delta \hh_{jk}+\frac{\Delta u}{n}\bar{g}_{jk}+2W_{ijkl}\hh^{il}-2n\lambda\hh_{jk}=\Delta_L \hh_{jk}+\frac{\Delta u}{n}\bar{g}_{jk}.
    \end{split}
  \end{equation*}
  From linearization of Ricci tensor we have
  \begin{equation*}
    \begin{split}
      R'_{jk}&=-\frac{1}{2}(\Delta_L h_{jk}+\nabla_j\nabla_k trh+\nabla_j(\delta h)_k+\nabla_k(\delta h)_j)\\
      &=-\frac{1}{2}(\Delta_L \hh_{jk}+\frac{\Delta u}{n}\bar{g}_{jk}+\frac{n-2}{n}\nabla_j\nabla_k u)
    \end{split}
  \end{equation*}
  Thus
  \begin{equation*}
    \begin{split}
      \int_M h^{jk}R'_{jk} dv_{\bar{g}}=&-\frac{1}{2}\int_M (\hh^{jk}+\frac{u}{n}g^{jk})(\Delta_L \hh_{jk}+\frac{\Delta u}{n}\bar{g}_{jk}+\frac{n-2}{n}\nabla_j\nabla_k u) dv_{\bar{g}}\\
      =&-\frac{1}{2}\int_M \hh \cdot \Delta_L \hh dv_{\bar{g}}-\frac{1}{2n}\int_M u\Delta u dv_{\bar{g}}-\frac{n-2}{2n^2}\int_M u\Delta u dv_{\bar{g}}\\
      =&-\frac{1}{2}\int_M \hh \cdot \Delta_L \hh dv_{\bar{g}}+\frac{n-1}{n^2}\int_M|\nabla u|^2 dv_{\bar{g}}
    \end{split}
  \end{equation*}
\end{proof}

\begin{proposition}\label{prop:3.3}
  \begin{equation*}
    \int_M |Ric'|^2\dvg=\frac{1}{4}\int_M \Delta_L^2 \hh\cdot \hh\dvg+\frac{n-1}{4n}\int_M (\Delta u)^2\dvg-\frac{(n-1)(n-2)^2}{4n^2}\lambda\int_M |\nabla u|^2\dvg.
  \end{equation*}
\end{proposition}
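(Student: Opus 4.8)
The plan is to expand the square $|Ric'|^2$ using the formula for the linearized Ricci tensor from the previous proposition,
\begin{equation*}
R'_{jk}=-\frac{1}{2}\Big(\Delta_L\hh_{jk}+\frac{\Delta u}{n}\bar{g}_{jk}+\frac{n-2}{n}\nabla_j\nabla_k u\Big),
\end{equation*}
and then integrate term by term. Setting $A_{jk}=\Delta_L\hh_{jk}$, $B_{jk}=\frac{\Delta u}{n}\bar{g}_{jk}$, $C_{jk}=\frac{n-2}{n}\nabla_j\nabla_k u$, one has
\begin{equation*}
|Ric'|^2=\frac{1}{4}\big(|A|^2+|B|^2+|C|^2+2A\cdot B+2A\cdot C+2B\cdot C\big),
\end{equation*}
so the computation reduces to the six integrals $\int_M A\cdot A\,\dvg,\ \ldots,\ \int_M B\cdot C\,\dvg$.

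For the three diagonal integrals I would argue as follows. Since $\Delta_L$ is self-adjoint on symmetric $2$-tensors, $\int_M A\cdot A\,\dvg=\int_M\Delta_L^2\hh\cdot\hh\,\dvg$, which is the first term on the right-hand side. Since $|\bar{g}|^2=\bar{g}^{ij}\bar{g}^{kl}\bar{g}_{ik}\bar{g}_{jl}=n$, we get $\int_M B\cdot B\,\dvg=\frac{1}{n}\int_M(\Delta u)^2\,\dvg$. For $\int_M C\cdot C\,\dvg=\frac{(n-2)^2}{n^2}\int_M|\nabla^2 u|^2\,\dvg$ I would invoke the Bochner identity together with the Einstein condition $Ric_{\bar{g}}=(n-1)\lambda\bar{g}$, which gives $\int_M|\nabla^2 u|^2\,\dvg=\int_M(\Delta u)^2\,\dvg-(n-1)\lambda\int_M|\nabla u|^2\,\dvg$; this already produces the $\lambda\int_M|\nabla u|^2\,\dvg$ term with the correct coefficient $-\frac{(n-1)(n-2)^2}{4n^2}$.

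The cross terms are where the TT-gauge $\delta_{\bar{g}}\hh=0$, $tr_{\bar{g}}\hh=0$ enters. Tracing the definition $\Delta_L h_{jk}=\Delta h_{jk}+2R_{ijkl}h^{il}-R_{jp}h^p_k-R_{pk}h^p_j$ and using $\bar{g}^{jk}R_{ijkl}=R_{il}$ gives $tr_{\bar{g}}(\Delta_L h)=\Delta(tr_{\bar{g}}h)$ for every symmetric $2$-tensor, hence $tr_{\bar{g}}(\Delta_L\hh)=0$ and $\int_M A\cdot B\,\dvg=0$. For $\int_M A\cdot C\,\dvg=\frac{n-2}{n}\int_M(\Delta_L\hh)^{jk}\nabla_j\nabla_k u\,\dvg$ I would integrate by parts twice to obtain $\frac{n-2}{n}\int_M u\,\nabla_k\nabla_j(\Delta_L\hh)^{jk}\,\dvg$, which vanishes once one knows $\Delta_L\hh$ is divergence-free. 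Finally $B\cdot C=\frac{n-2}{n^2}(\Delta u)\,\bar{g}^{jk}\nabla_j\nabla_k u=\frac{n-2}{n^2}(\Delta u)^2$ pointwise, so $\int_M 2B\cdot C\,\dvg=\frac{2(n-2)}{n^2}\int_M(\Delta u)^2\,\dvg$. Adding up the surviving contributions, the coefficient of $\int_M(\Delta u)^2\,\dvg$ becomes $\frac{1}{4}\big(\frac{1}{n}+\frac{(n-2)^2}{n^2}+\frac{2(n-2)}{n^2}\big)=\frac{n-1}{4n}$, which is exactly the asserted identity.

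The one genuinely delicate point is the vanishing of $\int_M A\cdot C\,\dvg$, i.e. that $\Delta_L\hh$ remains transverse-traceless. Trace-freeness is immediate from the trace identity above, but divergence-freeness truly uses the Einstein hypothesis: one must commute a covariant derivative past the rough Laplacian on a $2$-tensor — which creates curvature terms — and then use the contracted second Bianchi identity (available because $Ric_{\bar{g}}$ is parallel) to cancel these against $\nabla^j(2W_{ijkl}\hh^{il})$. Alternatively one may simply invoke the standard fact that on an Einstein manifold $\Delta_L$ commutes with $\delta_{\bar{g}}$, $tr_{\bar{g}}$ and $\delta_{\bar{g}}^{*}$, so that it preserves $S^{TT}_{2,\bar{g}}(M)$. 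Everything else is integration by parts and the Bochner identity.
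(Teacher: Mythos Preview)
Your proposal is correct and follows essentially the same expansion as the paper: write $Ric'=-\frac{1}{2}(A+B+C)$ with $A=\Delta_L\hh$, $B=\frac{\Delta u}{n}\bar g$, $C=\frac{n-2}{n}\nabla^2 u$, compute the six inner products, and finish with the Bochner identity for $\int_M|\nabla^2 u|^2\,\dvg$. The diagonal terms, the $B\cdot C$ term, and the vanishing of $A\cdot B$ via $tr_{\bar g}(\Delta_L\hh)=\Delta(tr_{\bar g}\hh)=0$ are handled identically.

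The only genuine difference is in the treatment of the cross term $\int_M \Delta_L\hh\cdot\nabla^2u\,\dvg$. You dispose of it by invoking the standard fact that on an Einstein manifold $\Delta_L$ commutes with $\delta_{\bar g}$, so $\Delta_L\hh$ is again divergence-free and one integration by parts kills the term. The paper instead proves this vanishing by a direct, self-contained computation: it expands $\Delta_L\hh=\Delta\hh+2W\cdot\hh-2n\lambda\hh$, integrates $\int_M\Delta\hh\cdot\nabla^2u\,\dvg$ by parts, commutes covariant derivatives explicitly (picking up curvature terms), and observes that the resulting expression equals $-2\int_M W(\hh,\nabla^2u)\,\dvg$, which cancels the Weyl contribution. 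Your route is shorter and conceptually cleaner but imports an outside lemma; the paper's route is more hands-on and avoids quoting the commutation property of $\Delta_L$ with $\delta_{\bar g}$. Either is perfectly adequate here.
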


\begin{proof}
  First we show that $\int_M \Delta_L\hh\cdot \nabla^2 u\dvg=0$.
  \begin{equation*}
    \begin{split}
      \int_M \Delta_L\hh\cdot \nabla^2 u\dvg=&\int_M (\Delta\hh+2W\cdot \hh-2n\lambda\hh)\cdot\nabla^2 u\dvg\\
      =&\int_M \Delta\hh\cdot \nabla^2 u\dvg+2\int_M W(\hh,\nabla^2 u)\dvg.
    \end{split}
  \end{equation*}
  here we use the fact that $\delta\hh=0$, on the other hand
  \begin{equation*}
    \begin{split}
      \int_M \Delta\hh\cdot \nabla^2 u\dvg=&\int_M \nabla_k\nabla^k \hh^{ij}\nabla_i\nabla_j u\dvg=-\int_M \nabla_i \nabla_k\nabla^k \hh^{ij} \nabla_j u\dvg\\
      =&-\int_M (\nabla_k\nabla_i\nabla^k \hh^{ij}+R_{ikp}^k \nabla^p\hh^{ij}+R_{ikp}^i \nabla^k\hh^{pj}+R_{ikp}^j \nabla^k\hh^{ip} )\nabla_j u\dvg\\
      =&\int_M \nabla_i\nabla_k \hh^i_j\nabla^k\nabla^j u+\int_M R_{ikjp}\nabla^k\hh^{ip}\nabla^j u\dvg\\
      =&\int_M (\nabla_k\nabla_i\hh^i_j+R_{ikp}^i\hh^p_{j}-R_{ikj}^p\hh_p^i)\nabla^k\nabla^j u-\int_M R_{ikjp}\hh^{ip}\nabla^k\nabla^j u\dvg\\
      =&-2\int_M R_{ikjp}\hh^{ip}\nabla^k\nabla^j u\dvg=-2\int_M W(\hh,\nabla^2 u).
    \end{split}
  \end{equation*}
  here we use the fact $\nabla^k R_{ikjp}=0$ from the second Bianchi identity.Now we have
  \begin{equation*}
    \begin{split}
      \int_M |Ric'|^2\dvg=&\frac{1}{4}\int_M|\Delta_L\hh+\frac{\Delta u}{n}\bar{g}+\frac{n-2}{n}\nabla^2 u|^2\dvg\\
      =&\frac{1}{4}\int_M|\Delta_L \hh|^2+\frac{1}{n}(\Delta u)^2+\frac{(n-2)^2}{n^2}|\nabla^2 u|^2+\frac{2(n-2)}{n^2}(\Delta u)^2\dvg\\
      =&\frac{1}{4}\int_M \Delta_L^2 \hh\cdot \hh+\frac{3n-4}{n^2}(\Delta u)^2+\frac{(n-2)^2}{n^2}|\nabla^2 u|^2\dvg.
    \end{split}
  \end{equation*}
  From Bochner formula we obtain
  \begin{equation*}
    \int_M |\nabla^2 u|^2\dvg=\int_M (\Delta u)^2\dvg-(n-1)\lambda\int_M |\nabla u|^2 \dvg.
  \end{equation*}
  Thus
  \begin{align*}
    \int_M |Ric'|^2\dvg = &\frac{1}{4}\int_M \Delta_L^2 \hh \cdot \hh\dvg+\frac{n-1}{4n}\int_M (\Delta u)^2\dvg \\
    &-\frac{(n-1)(n-2)^2}{4n^2}\lambda\int_M |\nabla u|^2\dvg.
  \end{align*}
\end{proof}

\begin{proposition}\label{prop:3.4}
  \begin{equation*}
    \int_M g^{ij}R''_{ij}\dvg=-\frac{1}{2}\int_M \Delta_L\hh\cdot\hh\dvg-\frac{(n-1)(n-2)}{2n^2}\int_M |\nabla u|^2\dvg.
  \end{equation*}
\end{proposition}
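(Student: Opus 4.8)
The plan is to compute $\int_M \bar g^{ij}R''_{ij}\,\dvg$ directly from the second variation of the Levi-Civita connection along the affine path $g_t=\bar g+th$. Writing $\dot\Gamma^k_{ij}$ and $\ddot\Gamma^k_{ij}$ for the first two $t$-derivatives at $t=0$ of the Christoffel symbols of $g_t$, one checks from the affine dependence of $g_t$ on $t$ that
\[
\dot\Gamma^k_{ij}=\tfrac12\bar g^{kl}\big(\nabla_i h_{jl}+\nabla_j h_{il}-\nabla_l h_{ij}\big),\qquad
\ddot\Gamma^k_{ij}=-h^{kl}\big(\nabla_i h_{jl}+\nabla_j h_{il}-\nabla_l h_{ij}\big),
\]
and that the second-order Palatini identity reads
\[
R''_{ij}=\nabla_k\ddot\Gamma^k_{ij}-\nabla_i\ddot\Gamma^k_{kj}+2\dot\Gamma^k_{kl}\dot\Gamma^l_{ij}-2\dot\Gamma^k_{il}\dot\Gamma^l_{kj}.
\]
Tracing with $\bar g^{ij}$ and integrating over the closed manifold $M$, the two divergence terms integrate to zero since $\bar g$ is parallel, so
\[
\int_M \bar g^{ij}R''_{ij}\,\dvg=2\int_M\big(\bar g^{ij}\dot\Gamma^k_{kl}\dot\Gamma^l_{ij}-\bar g^{ij}\dot\Gamma^k_{il}\dot\Gamma^l_{kj}\big)\,\dvg .
\]

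For the first term, the TT-gauge relations $(\delta h)_k=-\tfrac1n\nabla_k u$ and $tr_{\bar g}h=u$ give $\dot\Gamma^k_{kl}=\tfrac12\nabla_l u$ and $\bar g^{ij}\dot\Gamma^l_{ij}=\tfrac{2-n}{2n}\nabla^l u$, so it contributes $\tfrac{2-n}{2n}\int_M|\nabla u|^2\,\dvg$. For the second term I would first establish the purely algebraic identity $\bar g^{ij}\dot\Gamma^k_{il}\dot\Gamma^l_{kj}=\tfrac12\nabla_a h_{bc}\nabla^b h^{ac}-\tfrac14|\nabla h|^2$ by expanding the nine products; inserting $h=\hh+\tfrac un\bar g$ and using $\delta\hh=0$, $tr_{\bar g}\hh=0$ to annihilate the mixed terms gives $\bar g^{ij}\dot\Gamma^k_{il}\dot\Gamma^l_{kj}=\tfrac12\nabla_a\hh_{bc}\nabla^b\hh^{ac}-\tfrac14|\nabla\hh|^2+\tfrac{2-n}{4n^2}|\nabla u|^2$. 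Integrating by parts, discarding $\nabla_e\nabla^a\hh_{ac}$ because $\delta\hh=0$, commuting the remaining covariant derivatives, and substituting $R_{ij}=(n-1)\lambda\bar g_{ij}$ together with $R_{ijkl}=W_{ijkl}+\lambda(\bar g_{il}\bar g_{jk}-\bar g_{ik}\bar g_{jl})$, one arrives at $\int_M\nabla_a\hh_{bc}\nabla^b\hh^{ac}\,\dvg=\int_M W_{ijkl}\hh^{il}\hh^{jk}\,\dvg-n\lambda\int_M|\hh|^2\,\dvg$. Since $\Delta_L\hh_{jk}=\Delta\hh_{jk}+2W_{ijkl}\hh^{il}-2n\lambda\hh_{jk}$, this is equivalent to $\int_M(\tfrac12\nabla_a\hh_{bc}\nabla^b\hh^{ac}-\tfrac14|\nabla\hh|^2)\,\dvg=\tfrac14\int_M\Delta_L\hh\cdot\hh\,\dvg$. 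Combining the two contributions and simplifying the numerical coefficients yields the asserted identity.

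The main obstacle is this last step: carrying out the integration by parts and the covariant-derivative commutations consistently with the paper's curvature convention $R_{jk}=g^{il}R_{ijkl}$, and verifying that the Ricci and Riemann contractions reassemble \emph{precisely} into $\Delta_L\hh\cdot\hh$ with no leftover Weyl term. A convenient consistency check is the conformal direction $\hh=0$, where both sides collapse to $-\tfrac{(n-1)(n-2)}{2n^2}\int_M|\nabla u|^2\,\dvg$; this can be confirmed independently from the conformal transformation law of the Ricci tensor. Alternatively, one could bypass the direct computation by combining the relation $\bar g^{ij}R''_{ij}=R''-2(n-1)\lambda|h|^2+2h^{ij}R'_{ij}$ (recorded just before \eqref{variation}) with the formula for $\int_M h^{ij}R'_{ij}\,\dvg$ already proved and the classical second variation of $\int_M R_g\,dv_g$ at an Einstein metric in TT-gauge.
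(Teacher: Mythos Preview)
Your argument is correct and yields exactly the stated identity; the Palatini expression for $R''_{ij}$, the algebraic identity $\bar g^{ij}\dot\Gamma^k_{il}\dot\Gamma^l_{kj}=\tfrac12\nabla_a h_{bc}\nabla^b h^{ac}-\tfrac14|\nabla h|^2$, the $\hh$--$u$ decomposition, and the integration by parts all check out, and the final coefficients combine to $-\tfrac{(n-1)(n-2)}{2n^2}$ as required.

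The route, however, is organized differently from the paper's. The paper starts from a closed formula for $R''_{jk}$ quoted from \cite{Y}, already written in terms of $h$, curvature contractions, and second covariant derivatives of $h$; it then traces, substitutes $h=\hh+\tfrac{u}{n}\bar g$, and integrates term by term. You instead use the Palatini form $R''_{ij}=\nabla_k\ddot\Gamma^k_{ij}-\nabla_j\ddot\Gamma^k_{ik}+2\dot\Gamma^k_{kl}\dot\Gamma^l_{ij}-2\dot\Gamma^k_{il}\dot\Gamma^l_{kj}$, which has the practical advantage that the $\ddot\Gamma$ terms are visibly divergences and vanish upon integration without any work, leaving only the quadratic $\dot\Gamma$ terms to evaluate; this makes the bookkeeping shorter than the paper's approach, where the various second-derivative terms from \cite{Y}'s formula have to be integrated by parts and recombined individually into $\Delta_L\hh\cdot\hh$. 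Your suggested alternative at the end---recovering $\int_M\bar g^{ij}R''_{ij}\,\dvg$ from the relation $\bar g^{ij}R''_{ij}=R''-2(n-1)\lambda|h|^2+2h^{ij}R'_{ij}$ together with Proposition~3.2 and the known second variation of the total scalar curvature---would be even more economical and avoids computing $R''_{ij}$ altogether.
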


\begin{proof}
  From \cite{Y} we obtain
  \begin{equation*}
    \begin{split}
      R_{jk}''=&h^{pi}(R_{ijkl}h^l_p+R_{ijpl}h^l_k-\nabla_i\nabla_kh_{jp}+\nabla_i\nabla_ph_{jk}+\nabla_j\nabla_kh_{ip}-\nabla_j\nabla_ph_{ik})\\
      &+\frac{1}{2}(\nabla_jh^p_k+\nabla_kh^p_j-\nabla^ph_{jk})(2(\delta h)_p+\nabla_p u)\\
      &+\frac{1}{2}(\nabla_ih^p_k+\nabla_kh^p_i-\nabla^ph_{ik})(\nabla^ih_{jp}+\nabla_jh^i_p-\nabla_ph^i_j).
    \end{split}
  \end{equation*}
  Recall that $(\delta h)_p=-\frac{1}{n}\nabla_p u$ and $R_{ij}=(n-1)\lambda g_{ij}$, we obtain
  \begin{equation*}
    \begin{split}
      g^{jk}R''_{jk}=&(n-1)\lambda |h|^2-R(h,h)+\frac{n-1}{n}h\cdot \nabla^2 u+h\cdot \Delta h-h^{pi}\nabla_j\nabla_p h^j_i\\
      &-\frac{(n-2)^2}{2n^2}|\nabla u|^2+\frac{3}{2}|\nabla h|^2-\nabla_i h_{jk}\nabla^kh^{ij}\\
      =&n\lambda |\hh|^2-W(\hh,\hh)+\frac{n-1}{n}\hh\cdot \nabla^2u+\hh\cdot\Delta\hh+\frac{2n-1}{n^2}u\Delta u-\frac{(n-2)^2}{2n^2}|\nabla u|^2\\
      &+\frac{3}{2}|\nabla \hh|^2+\frac{3}{2n}|\nabla u|^2-h^{pi}\nabla_j\nabla_p h^j_i-\nabla_i h_{jk}\nabla^kh^{ij}.
    \end{split}
  \end{equation*}
  Thus
  \begin{equation*}
    \begin{split}
      \int_M g^{ij}R''_{ij}\dvg=&\int_M\Big( n\lambda |\hh|^2-W(\hh,\hh)\Big)\dvg+\frac{1}{2}\int_M|\nabla\hh|^2\dvg\\
      &-\frac{n^2-3n+2}{2n^2}\int_M |\nabla u|^2\dvg - \int_M h^{pi}\nabla_j\nabla_p h^j_i\dvg-\int_M \nabla_i h_{jk}\nabla^kh^{ij}\dvg.\\
      =&-\frac{1}{2}\int_M \Delta_L\hh\cdot\hh\dvg-\frac{(n-1)(n-2)}{2n^2}\int_M |\nabla u|^2\dvg\\
      &-\int_M \nabla_i (h_{jk}\nabla^k h^{ij}) \dvg\\
      =&-\frac{1}{2}\int_M \Delta_L\hh\cdot\hh\dvg-\frac{(n-1)(n-2)}{2n^2}\int_M |\nabla u|^2\dvg
    \end{split}
  \end{equation*}
\end{proof}

\begin{proposition}\label{prop:3.5}
  \begin{equation*}
  \int_M (R')^2\dvg=\frac{(n-1)^2}{n^2}\int_M (\Delta u)^2\dvg-\frac{2(n-1)^2}{n}\lambda\int_M|\nabla u|^2+(n-1)^2\lambda^2\int_M u^2\dvg.
  \end{equation*}
\end{proposition}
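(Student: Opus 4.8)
The plan is to reduce the identity to the explicit formula for the linearized scalar curvature that already appeared in the proof of Proposition~3.1, and then integrate by parts once. First I would recall that in the Einstein TT-gauge — where $R_{ij}=(n-1)\lambda\bar{g}_{ij}$, $trh=u$, and $(\delta h)_k=-\frac1n\nabla_k u$ — the general linearization formula $(DR)\cdot h=-\Delta(trh)+\delta^2 h-Ric\cdot h$ from Section~2 collapses to a scalar expression in $u$ alone. Indeed $\delta^2 h=\nabla^i\nabla^j h_{ij}=\frac1n\Delta u$ by the gauge condition, while $Ric_{\bar{g}}\cdot h=(n-1)\lambda\,u$ by the Einstein equation, so
\[
R'=-\Delta u+\tfrac1n\Delta u-(n-1)\lambda u=(1-n)\Big(\tfrac1n\Delta u+\lambda u\Big),
\]
which is precisely the formula used in the proof of Proposition~3.1.

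Given this, the proof is essentially immediate. I would square the pointwise identity,
\[
(R')^2=(n-1)^2\Big(\tfrac1{n^2}(\Delta u)^2+\tfrac{2\lambda}{n}\,u\,\Delta u+\lambda^2 u^2\Big),
\]
integrate over the closed manifold $M$, and apply the divergence theorem to the mixed term, using $\int_M u\,\Delta u\,\dvg=-\int_M|\nabla u|^2\,\dvg$. Collecting the three resulting terms gives
\[
\int_M(R')^2\,\dvg=\frac{(n-1)^2}{n^2}\int_M(\Delta u)^2\,\dvg-\frac{2(n-1)^2}{n}\lambda\int_M|\nabla u|^2\,\dvg+(n-1)^2\lambda^2\int_M u^2\,\dvg,
\]
which is the asserted identity. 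Together with Propositions~3.2--3.4 this is exactly the ingredient needed to rewrite \eqref{variation} in terms of $u$ and $\hh$ only, which is why the proposition is stated here.

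There is no genuine obstacle; the only thing requiring care is bookkeeping of conventions, so that the signs and the Einstein normalization stay consistent with the rest of the paper: $\delta$ is the negative divergence on both $1$-forms and symmetric $2$-tensors (hence $\delta^2 h=+\frac1n\Delta u$ in the gauge), $\Delta$ is the rough Laplacian with nonpositive spectrum on functions, and $Ric_{\bar{g}}=(n-1)\lambda\bar{g}$ so that $Ric\cdot h=(n-1)\lambda\,trh$. Once these are pinned down, squaring the scalar formula for $R'$ and a single integration by parts complete the argument.
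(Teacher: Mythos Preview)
Your proposal is correct and follows essentially the same approach as the paper: the paper also recalls the formula $R'=\delta^2 h-\Delta u-(n-1)\lambda u=(1-n)\big(\tfrac1n\Delta u+\lambda u\big)$, squares it, and integrates by parts on the cross term. Your write-up is in fact a bit more careful than the paper's about tracking the sign conventions for $\delta$ and $\delta^2$, but the argument is identical.
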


\begin{proof}
  In the case that $Ric=(n-1)\lambda \bar{g}$ we obtain that
  \begin{align*}
    R'=\delta^2 h-\Delta u-(n-1)\lambda u=(1-n)(\frac{1}{n}\Delta u+\lambda u),
  \end{align*}
  thus
  \begin{equation*}
    \begin{split}
      \int_M (R')^2\dvg=&(n-1)^2\int_M(\frac{1}{n}\Delta u+\lambda u)^2\dvg\\
      =&\frac{(n-1)^2}{n^2}\int_M (\Delta u)^2\dvg-\frac{2(n-1)^2}{n}\lambda\int_M|\nabla u|^2+(n-1)^2\lambda^2\int_M u^2\dvg.
    \end{split}
  \end{equation*}
\end{proof}

Now combine Proposition \ref{prop:3.2} to Proposition \ref{prop:3.5} we obtain
\begin{equation*}
  \begin{split}
    &\int_M D^2\sigma_2(\bar{g})(h,h)\dvg\\
    =&-\frac{1}{4}\int_M \Big(\Delta^2_L \hh\cdot\hh-\frac{n^2-12n+12}{2}\lambda\Delta_L \hh\cdot \hh\Big)\dvg+\frac{3(n-1)(n-2)^2}{4n}\lambda^2\int_M u^2\dvg\\
    &+\frac{(n-1)(n^2-6n+6)}{2}\lambda^2\int_M|\hh|^2\dvg-\frac{(n-1)(n-2)^2(n+4)}{8n^2}\lambda\int_M |\nabla u|^2\dvg.
  \end{split}
\end{equation*}

Finally we obtain
\begin{equation*}
  \begin{split}
    &\text{Vol}(\bar{g})^{-\frac{4}{n}}D^2\mathcal{H}_{\bar{g}}(h,h)\\
    =&-\frac{1}{4}\int_M \Big(\Delta^2_L \hh\cdot\hh-\frac{n^2-12n+12}{2}\lambda\Delta_L \hh\cdot \hh-(n-1)(n^2-8n+8)\lambda^2\hh\cdot\hh\Big)\dvg\\
    &+\frac{(n-1)(n-2)^2(n+4)}{8n}\Big(\lambda^2\int_M(u-\bar{u})^2\dvg -\frac{\lambda}{n}\int_M|\nabla u|^2\dvg \Big)\\
    =&-\frac{1}{4}\int_M \Big(\Delta_L+2(n-1)\lambda\Big)\Big(\Delta_L-\frac{n^2-8n+8}{2}\lambda\Big)\hh\cdot\hh\dvg\\
    &-\frac{(n-1)(n-2)^2(n+4)}{8n^2}\lambda\int_M\Big(|\nabla u|^2-n\lambda(u-\bar{u})^2\Big)\dvg
  \end{split}
\end{equation*}
Notice that in our case $\Delta_E=\Delta_L+2(n-1)\lambda$, the second variation formula of $\mathcal{H}_{\bar{g}}$ can be expressed as below:

\begin{proposition}\label{prop:second_order_var}
  Suppose $(M^n,\bar{g})$ is an Einstein manifold with
  \begin{align*}
    Ric_{\bar{g}}=\lambda (n-1)\bar{g},
  \end{align*}
  then
\begin{equation*}
  \begin{split}
    \text{Vol}(\bar{g})^{-\frac{4}{n}}D^2\mathcal{H}_{\bar{g}}(h,h)=&-\frac{1}{4}\int_M \Delta_E\Big(\Delta_E-\frac{(n-2)^2}{2}\lambda\Big)\hh\cdot\hh\dvg\\
    &-\frac{(n-1)(n-2)^2(n+4)}{8n^2}\lambda\int_M\Big(|\nabla u|^2-n\lambda(u-\bar{u})^2\Big)\dvg
  \end{split}
\end{equation*}
for any $h=\hh+\frac{1}{n}u\bar{g}\in S^{TT}_{2,\bar{g}}(M)\oplus  (C^\infty(M)\cdot\bar{g}).$
\end{proposition}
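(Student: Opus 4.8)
Since the computation leading to the statement has, in effect, been carried out in the lines immediately above, the proof is a matter of assembling the pieces and keeping track of coefficients. I would organize it in three stages.

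\textbf{Stage 1: reduce to $\int_M D^2\sigma_2(\bar g)(h,h)\,\dvg$.} I would start from the second-variation expansion of $\mathcal{H}_{\bar g}$ obtained by differentiating $\mathrm{Vol}(g)^{4/n}\int_M\sigma_2(g)\,\dvg$ twice along $g_t=\bar g+th$, which expresses $D^2\mathcal{H}_{\bar g}(h,h)$ through $\int_M\sigma_2(\bar g)\,\dvg$, $D\mathrm{Vol}(\bar g)\cdot h$, $D^2\mathrm{Vol}(\bar g)(h,h)$, $\int_M D\sigma_2(\bar g)\cdot h\,\dvg$ and $\int_M D^2\sigma_2(\bar g)(h,h)\,\dvg$. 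On the Einstein background one has the explicit values $\int_M\sigma_2(\bar g)\,\dvg=\tfrac{n(n-1)(n-2)^2}{8}\mathrm{Vol}(\bar g)$, $D\mathrm{Vol}(\bar g)\cdot h=\tfrac12\int_M u\,\dvg$, $D^2\mathrm{Vol}(\bar g)(h,h)=\tfrac{n-2}{4n}\int_M u^2\,\dvg-\tfrac12\int_M|\hh|^2\,\dvg$, and $\int_M D\sigma_2(\bar g)\cdot h\,\dvg=-\tfrac{(n-1)(n-2)^2}{4}\int_M u\,\dvg$ (the last from the first-variation computation), together with the Cauchy--Schwarz-type identity $\big(\int_M u\,\dvg\big)^2=\mathrm{Vol}(\bar g)\int_M\bar u^2\,\dvg$. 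Substituting these and simplifying collapses all purely volume-dependent terms and yields $\mathrm{Vol}(\bar g)^{-4/n}D^2\mathcal{H}_{\bar g}(h,h)$ equal to $\int_M D^2\sigma_2(\bar g)(h,h)\,\dvg$ plus the explicit combination of $\int|\hh|^2$, $\int u^2$ and $\int\bar u^2$ displayed above.

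\textbf{Stage 2: integrate \eqref{variation}.} I would then integrate the pointwise identity \eqref{variation} term by term: $\lambda^2|h|^2$ splits via $|h|^2=|\hh|^2+\tfrac1n u^2$; the term $\lambda h^{ij}R'_{ij}$ is given by Proposition 3.2; $-|Ric'|^2$ by Proposition 3.3; $\lambda\,\bar g^{ij}R''_{ij}$ by Proposition 3.4; and $\tfrac{n}{4(n-1)}(R')^2$ by Proposition 3.5. The only non-obvious input is the vanishing $\int_M\Delta_L\hh\cdot\nabla^2 u\,\dvg=0$ used inside Proposition 3.3 (a commutator computation where the residual Weyl term cancels), after which all Weyl curvature is packaged inside $\Delta_L$ and $\Delta_L^2$, together with the Bochner identity $\int_M|\nabla^2 u|^2\,\dvg=\int_M(\Delta u)^2\,\dvg-(n-1)\lambda\int_M|\nabla u|^2\,\dvg$, which removes $|\nabla^2 u|^2$. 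One then checks that the $\int(\Delta u)^2$ contributions cancel completely, so the scalar part reduces to combinations of $\int|\nabla u|^2$ and $\int u^2$, while the tensorial part is a quadratic polynomial in $\Delta_L$ applied to $\hh$ and paired with $\hh$.

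\textbf{Stage 3: factor and collect.} Combining Stages 1 and 2, the $\hh$-part becomes $-\tfrac14\int_M\big(\Delta_L^2-\tfrac{n^2-12n+12}{2}\lambda\Delta_L-(n-1)(n^2-8n+8)\lambda^2\big)\hh\cdot\hh\,\dvg$. I would factor this quadratic in $\Delta_L$ as $\big(\Delta_L+2(n-1)\lambda\big)\big(\Delta_L-\tfrac{n^2-8n+8}{2}\lambda\big)$; since $\Delta_E=\Delta_L+2(n-1)\lambda$ on the Einstein background and $-\tfrac{n^2-8n+8}{2}\lambda=-\tfrac{(n-2)^2}{2}\lambda+2(n-1)\lambda$, this equals $\Delta_E\big(\Delta_E-\tfrac{(n-2)^2}{2}\lambda\big)$. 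For the scalar part I would gather every contribution in $\int u^2$, $\int\bar u^2$ and $\int|\nabla u|^2$ — coming from $D^2\mathrm{Vol}$, from $(D\mathrm{Vol}(\bar g)\cdot h)^2$, and from the scalar parts of Propositions 3.2--3.5 — and observe that the scaling invariance of $\mathcal{H}_{\bar g}$ forces the $\int u^2$ and $\int\bar u^2$ coefficients to combine into $\int(u-\bar u)^2$, producing the stated coefficient $-\tfrac{(n-1)(n-2)^2(n+4)}{8n^2}\lambda$ in front of $\int_M\big(|\nabla u|^2-n\lambda(u-\bar u)^2\big)\,\dvg$. The main obstacle is entirely arithmetic: correctly combining the several independent sources of $u$-terms, verifying the $\int(\Delta u)^2$ cancellation, and confirming the factorization of the $\Delta_L$-polynomial; once Propositions 3.2--3.5 are in hand, no conceptual difficulty remains.
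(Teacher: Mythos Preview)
Your proposal is correct and follows essentially the same route as the paper: the paper does not give Proposition~3.6 a separate proof but obtains it exactly as you describe, by expanding $D^2\mathcal{H}_{\bar g}$ into volume variations plus $\int_M D^2\sigma_2(\bar g)(h,h)\,\dvg$, integrating \eqref{variation} term by term via Propositions~3.2--3.5 (including the Bochner identity and the $(\Delta u)^2$ cancellation you mention), and then factoring the resulting quadratic in $\Delta_L$ using $\Delta_E=\Delta_L+2(n-1)\lambda$. Your scaling-invariance remark to explain why the $\int u^2$ and $\int\bar u^2$ terms must combine into $\int(u-\bar u)^2$ is a nice consistency check that the paper does not make explicit; otherwise the arguments coincide.
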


\section{Volume comparison with respect to $\sigma_2-$curvature}\label{sec:main}
In this section, we give the proof of our main result, before that we state some lemma we could use later.

A key step in our proof is to investigate the second order variation of the functional $\funch(g)$ at $\bar{g}$. According to the formula we derived in Proposition \ref{prop:second_order_var}, we need the following well-known Lichnerowicz-Obata's eigenvalue estimate \cite{LI}\cite{O}, we refer more details in Theorem 5.1 in \cite{L}.
\begin{lemma}[Lichnerowicz-Obata's eigenvalue estimate]\label{lemma:Lichnerowicz-Obata}
  Suppose $(M^n,\bar{g})$ is an $n$ dimensional closed Riemannian manifold with
  \begin{align*}
    Ric_{\bar{g}}\geq(n-1)\lambda \bar{g},
  \end{align*}
  where $ \lambda>0$ is a constant. Then for any function $f\in C^\infty(M)\setminus\{0\}$ with
  \begin{align*}
    \int_M f dv_{\bar{g}}=0,
  \end{align*}
  we have
  \begin{align*}
    \int_M|\nabla f|^2dv_{\bar{g}}\geq n\lambda\int_M f^2dv_{\bar{g}},
  \end{align*}
  where equality holds if and only id $(M^n,\bar{g})$ is isometric to the round sphere $\mathbb{S}^n(r)$ with radius $r=\frac{1}{\sqrt{\lambda}}$ and $f$ is a first eigenfunction of the Laplace-Beltrami operator.
\end{lemma}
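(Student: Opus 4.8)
The plan is to recognize the stated inequality as the classical lower bound on the first nonzero eigenvalue of the Laplacian and to settle the equality case via Obata's rigidity theorem. Write $\lambda_1$ for the smallest positive eigenvalue of $-\Delta_{\bar g}$. Since any $f$ with $\int_M f\,dv_{\bar g}=0$ is $L^2$-orthogonal to the constants, the min-max characterization gives $\int_M|\nabla f|^2\,dv_{\bar g}\ge\lambda_1\int_M f^2\,dv_{\bar g}$, so it suffices to show $\lambda_1\ge n\lambda$ and the stated inequality then follows for all admissible $f$.

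To prove $\lambda_1\ge n\lambda$, take a nonzero eigenfunction $f$ with $\Delta_{\bar g}f=-\lambda_1 f$ and apply the Bochner formula
\[
\tfrac12\Delta_{\bar g}|\nabla f|^2=|\nabla^2 f|^2+\langle\nabla f,\nabla\Delta_{\bar g}f\rangle+Ric_{\bar g}(\nabla f,\nabla f).
\]
Integrating over $M$ kills the left-hand side; using $\int_M\langle\nabla f,\nabla\Delta_{\bar g}f\rangle\,dv_{\bar g}=-\int_M(\Delta_{\bar g}f)^2\,dv_{\bar g}=-\lambda_1\int_M|\nabla f|^2\,dv_{\bar g}$, the hypothesis $Ric_{\bar g}\ge(n-1)\lambda\,\bar g$, and the pointwise Cauchy--Schwarz bound $|\nabla^2 f|^2\ge\tfrac1n(\Delta_{\bar g}f)^2$ combined with $\int_M(\Delta_{\bar g}f)^2\,dv_{\bar g}=\lambda_1\int_M|\nabla f|^2\,dv_{\bar g}$, one obtains
\[
0\ \ge\ \Big(\tfrac{\lambda_1}{n}-\lambda_1+(n-1)\lambda\Big)\int_M|\nabla f|^2\,dv_{\bar g},
\]
and since $f$ is non-constant this forces $\lambda_1\ge n\lambda$.

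For the rigidity statement, assume equality holds for some non-constant $f$ with $\int_M f\,dv_{\bar g}=0$. Then the chain $\int_M|\nabla f|^2\,dv_{\bar g}\le n\lambda\int_M f^2\,dv_{\bar g}\le\lambda_1\int_M f^2\,dv_{\bar g}\le\int_M|\nabla f|^2\,dv_{\bar g}$ forces $\lambda_1=n\lambda$ and shows $f$ is itself a first eigenfunction; rerunning the Bochner computation above with this $f$ turns every inequality into an equality, so in particular $|\nabla^2 f|^2=\tfrac1n(\Delta_{\bar g}f)^2$ pointwise, i.e. the traceless part of the Hessian vanishes and $\nabla^2 f=-\lambda f\,\bar g$. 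The conclusion then follows from Obata's theorem: a closed Riemannian manifold admitting a non-constant solution of $\nabla^2 f=-\lambda f\,\bar g$ is isometric to the round sphere of radius $1/\sqrt\lambda$. I would recall Obata's argument by first observing that $|\nabla f|^2+\lambda f^2$ has vanishing gradient, hence is a constant that we may normalize, then analyzing the gradient flow of $f$ to see that $f$ attains its extrema at exactly two points, that its regular level sets are geodesic spheres, and that in geodesic polar coordinates about a maximum the metric has the warped-product form $dr^2+\lambda^{-1}\sin^2(\sqrt\lambda\,r)\,g_{\mathbb{S}^{n-1}}$, which is precisely the round metric of radius $1/\sqrt\lambda$.

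The inequality itself is a routine Bochner estimate; the genuine obstacle is the equality case, where one must pass from the purely pointwise identity $\nabla^2 f=-\lambda f\,\bar g$ to a global isometry. This step is not another integral inequality but the geometric heart of Obata's theorem — controlling $f$ near its critical points and reconstructing the metric as an explicit warped product — and is what one should expect to be the most delicate part of the write-up.
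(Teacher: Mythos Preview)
Your argument is the standard and correct proof of the Lichnerowicz--Obata theorem: the Bochner formula combined with the Cauchy--Schwarz inequality $|\nabla^2 f|^2\ge\frac{1}{n}(\Delta f)^2$ yields $\lambda_1\ge n\lambda$, and equality forces the traceless Hessian to vanish, after which Obata's rigidity theorem identifies the manifold with the round sphere of radius $1/\sqrt{\lambda}$. The chain of inequalities in your rigidity paragraph is slightly garbled in its presentation (you wrote $\int|\nabla f|^2\le n\lambda\int f^2$ when you mean the equality assumption), but the intended logic is clear and correct.

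By comparison, the paper does not prove this lemma at all: it is simply stated with citations to Lichnerowicz and Obata and then invoked as a black box in the proof of Proposition~4.2. So you have supplied strictly more than the paper does here; there is nothing to compare at the level of method, since the paper's ``proof'' is just a reference.
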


Applying this to Proposition \ref{prop:second_order_var}, immediately we obtain the non-positive definite property of the second order variation of $\funch(g)$ and Einstein metric $\bg$.

\begin{proposition}\label{prop: characteristic_second_order}
  Suppose $(M^n,\bar{g})$ is a strictly stable Einstein manifold with Ricci curvature tensor
  \begin{align*}
    Ric_{\bar{g}}=(n-1)\lambda\bar{g},
  \end{align*}
  where $\lambda\geq 0$ is a constant, then $\bar{g}$ is a critical point of $\mathcal{H}_{\bar{g}}$ and
  \begin{align*}
    D^2\mathcal{H}_{\bar{g}}(h,h)\leq 0
  \end{align*}
  for any $h=\hh+\frac{1}{n}(tr_{\bar{g}}h)\bar{g}\in S^{TT}_{2,\bar{g}}(M)\oplus(C^{\infty}(M)\cdot \bar{g})$. Furthermore the equality holds if and only if
  \begin{itemize}
    \item $h\in \mathbb{R}\bar{g}$, when $(M^n ,\bar{g})$ is not isometric to the round sphere up to a rescaling of the metric.
    \item $h\in(\mathbb{R}\oplus E_{n\lambda})\bar{g}$, when $(M^n,\bar{g})$ is isometric to the round sphere $\mathbb{S}^n(r)$ with radius $\frac{1}{\sqrt{\lambda}}$,
    where
    \begin{align*}
      E_{n\lambda}=\{u\in C^{\infty}(\mathbb{S}^n(r))|\Delta_{\mathbb{S}^n(r)}u+n\lambda u=0\}
    \end{align*}
    is the space of first eigenfunctions for the spherical metric.
  \end{itemize}
\end{proposition}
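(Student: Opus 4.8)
The plan is to derive everything from the second variation formula established in Proposition~3.6, namely
\begin{equation*}
\begin{split}
\text{Vol}(\bar{g})^{-\frac{4}{n}}D^2\mathcal{H}_{\bar{g}}(h,h)=&-\frac{1}{4}\int_M \Delta_E\Big(\Delta_E-\frac{(n-2)^2}{2}\lambda\Big)\hh\cdot\hh\dvg\\
&-\frac{(n-1)(n-2)^2(n+4)}{8n^2}\lambda\int_M\Big(|\nabla u|^2-n\lambda(u-\bar{u})^2\Big)\dvg,
\end{split}
\end{equation*}
by showing that each of the two integrals is nonnegative under the hypotheses. The fact that $\bar g$ is a critical point is already Proposition~3.1, so only the sign of $D^2\mathcal H_{\bar g}$ and the equality analysis remain.

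For the TT part, I would decompose $\hh$ into eigentensors of the Einstein operator $\Delta_E$ on $S^{TT}_{2,\bar g}(M)$. By strict stability, $\Delta_E$ is negative definite on $S^{TT}_{2,\bar g}(M)\setminus\{0\}$, so every eigenvalue $\mu$ of $\Delta_E$ restricted to this space satisfies $\mu<0$; hence $\mu\big(\mu-\tfrac{(n-2)^2}{2}\lambda\big)=\mu^2-\tfrac{(n-2)^2}{2}\lambda\mu>0$ whenever $\mu<0$ (here one uses $\lambda\ge 0$, so $-\tfrac{(n-2)^2}{2}\lambda\mu\ge 0$). Therefore $\Delta_E\big(\Delta_E-\tfrac{(n-2)^2}{2}\lambda\big)$ is a nonnegative operator on $S^{TT}_{2,\bar g}(M)$, which makes $-\tfrac14\int_M \Delta_E(\Delta_E-\tfrac{(n-2)^2}{2}\lambda)\hh\cdot\hh\,\dvg\le 0$, with equality precisely when $\hh=0$. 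For the conformal part, apply Lemma~4.1 (Lichnerowicz--Obata) to $f=u-\bar u$, which has zero mean: this gives $\int_M|\nabla u|^2\,\dvg=\int_M|\nabla(u-\bar u)|^2\,\dvg\ge n\lambda\int_M(u-\bar u)^2\,\dvg$, so the second integrand is nonnegative and, since the prefactor carries an overall minus sign, the second term is $\le 0$ as well; when $\lambda=0$ this term simply vanishes. Adding the two contributions yields $D^2\mathcal H_{\bar g}(h,h)\le 0$.

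For the equality discussion, equality forces both terms to vanish simultaneously. The TT term vanishes iff $\hh=0$, i.e. $h=\tfrac1n u\bar g$ is pure trace. The conformal term vanishes iff either $\lambda=0$, or $u-\bar u$ is identically zero (so $u$ is constant and $h\in\mathbb R\bar g$), or $u-\bar u$ realizes equality in Lichnerowicz--Obata. By the rigidity clause of Lemma~4.1, the latter can happen with $u-\bar u\not\equiv 0$ only when $(M^n,\bar g)$ is the round sphere $\mathbb S^n(r)$, $r=1/\sqrt\lambda$, and then $u-\bar u$ is a first eigenfunction, i.e. lies in $E_{n\lambda}$; thus $u\in\mathbb R\oplus E_{n\lambda}$ and $h\in(\mathbb R\oplus E_{n\lambda})\bar g$. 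This gives case (2) on the round sphere and case (1) otherwise. (When $\lambda=0$, $\bar g$ is never a rescaled round sphere, so case (1) applies and $h\in\mathbb R\bar g$.)

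The main obstacle is the spectral bookkeeping for the TT term: one must be careful that ``strictly stable'' gives a strictly negative \emph{spectrum} for $\Delta_E$ on $S^{TT}_{2,\bar g}(M)$ (not merely negativity of the quadratic form on smooth sections, though on a closed manifold these coincide via the spectral theorem for the self-adjoint elliptic operator $\Delta_E$), and that the polynomial $t\mapsto t(t-\tfrac{(n-2)^2}{2}\lambda)$ is genuinely nonnegative on $(-\infty,0]$ for all $n\ge 3$ and $\lambda\ge 0$ — which it is, with a strict sign for $t<0$. A secondary point requiring care is the case $\lambda=0$: then the conformal term drops out identically and the conclusion reduces to $\hh=0$ and $h\in\mathbb R\bar g$, consistent with the Ricci-flat rigidity statement (Theorem~1.6); no appeal to Lichnerowicz--Obata, which needs $\lambda>0$, is made there.
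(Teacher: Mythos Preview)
Your argument is essentially the paper's own proof, carried out with more care: the paper also invokes Proposition~3.6, appeals to strict stability for the TT block and to Lemma~4.1 (Lichnerowicz--Obata) with $f=u-\bar u$ for the conformal block, and reads off the two equality cases in the same way. Your spectral justification that $\mu(\mu-\tfrac{(n-2)^2}{2}\lambda)>0$ for every eigenvalue $\mu<0$ of $\Delta_E$ on $S^{TT}_{2,\bar g}(M)$ is exactly the content the paper compresses into the phrase ``$\Delta_E$ is positive definite'' (a somewhat loose formulation there).

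One small slip: in the $\lambda=0$ case you write that ``the conclusion reduces to $\hh=0$ and $h\in\mathbb R\bar g$.'' When $\lambda=0$ the conformal term in Proposition~3.6 vanishes identically, so equality in $D^2\mathcal H_{\bar g}(h,h)\le 0$ only forces $\hh=0$, i.e.\ $h=\tfrac{1}{n}u\,\bar g$ with $u$ an \emph{arbitrary} smooth function, not necessarily a constant. The equality set at $\lambda=0$ is therefore all of $C^\infty(M)\cdot\bar g$, which is strictly larger than $\mathbb R\bar g$; the paper's proof is equally silent on this point (it applies Lichnerowicz--Obata without separating off $\lambda=0$), so this is really an artifact of the proposition's statement rather than a defect in your strategy.
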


\begin{proof}
  According to Proposition \ref{prop:einstein_critical}, $\bar{g}$ is a critical metric of $\mathcal{H}_{\bar{g}}$, from Proposition \ref{prop:second_order_var} we have
  \begin{equation*}
    \begin{split}
      \text{Vol}(\bar{g})^{-\frac{4}{n}}D^2\mathcal{H}_{\bar{g}}(h,h)=&-\frac{1}{4}\int_M \Delta_E\Big(\Delta_E-\frac{(n-2)^2}{2}\lambda\Big)\hh\cdot\hh\dvg\\
      &-\frac{(n-1)(n-2)^2(n+4)}{8n^2}\lambda\int_M\Big(|\nabla u|^2-n\lambda(u-\bar{u})^2\Big)\dvg
    \end{split}
  \end{equation*}
  We show that $D^2 \mathcal{H}_{\bar{g}}$ is non-positive definite according to Lemma \ref{lemma:Lichnerowicz-Obata} and $\Delta_E$ is positive definite operator. Furthermore, when $D^2 \mathcal{H}_{\bar{g}}\leq 0$, we have
  \begin{align*}
    \int_M \Delta_E\Big(\Delta_E-\frac{(n-2)^2}{2}\lambda\Big)\hh\cdot\hh\dvg=0
  \end{align*}
  and
  \begin{align*}
    \int_M\Big(|\nabla u|^2-n\lambda(u-\bar{u})^2\Big)\dvg=0
  \end{align*}
  The first equation implies that $\hh=0$ since $\Delta_E$ is positive definite, now apply Lemma \ref{lemma:Lichnerowicz-Obata} and let $f=u-\bar{u}$, there are two cases:
  \begin{itemize}
    \item $(M^n,\bar{g})$ is not isometric to the round sphere, in this case Lemma \ref{lemma:Lichnerowicz-Obata} implies that $u=\bar{u}$, in other words $u$ is a constant, thus $h=\hh+\frac{1}{n}u \bar{g}\in \mathbb{R}\bar{g}$.
    \item $(M^n,\bar{g})$ is isometric to the round sphere, in this case we hold $f\in E_{n\lambda}$, which implies $u=f+\bar{u} \in \mathbb{R}\oplus E_{n\lambda}$, thus $h=\hh+\frac{1}{n}u \bar{g}\in (\mathbb{R}\oplus E_{n\lambda})\bar{g}$.
  \end{itemize}
\end{proof}

Now we need to state Ebin-Palais slice theorem \cite{E}, this theorem suggests that one can define a local slice $\mathcal{S}_{\bar{g}}$ which is a set of equivalent classes of metrics near $\bar{g}$ modulo diffeomorphisms.

\begin{theorem}[Ebin-Palais slice theorem]\label{theorem: Ebin-Palais}
  Suppose $(M^n,\bar{g})$ is a closed $n-$dimensional Einstein manifold with Ricci curvature tensor
  \begin{align*}
    Ric_{\bar{g}}=(n-1)\lambda\bar{g}
  \end{align*}
  where $\lambda$ is a constant. Then there exists a local slice $\mathcal{S}_{\bar{g}}$ though $\bar{g}$. That is, one can find constant $\epsilon>0$ such that for any metric $g$ satisfies $||g-\bar{g}||_{C^2(M,\bar{g})}$, there is a diffeomorphism $\varphi$ and $\varphi^*g\in\mathcal{S}_{\bar{g}}$
  In particular, for a smooth local slice $\mathcal{S}_{\bar{g}}$, we obtain the decomposition
  \begin{align*}
    S_2(M)=T_{\bar{g}}\mathcal{S}_{\bar{g}}\oplus (T_{\bar{g}}\mathcal{S}_{\bar{g}})^\perp
  \end{align*}
  In the case that $(M^n,\bar{g})$ is not isometric to round sphere we have:
  \begin{align*}
    T_{\bar{g}}\mathcal{S}_{\bar{g}} &= S^{TT}_{2,\bar{g}}(M)\oplus (C^\infty(M)\cdot \bar{g})\\
    (T_{\bar{g}}\mathcal{S}_{\bar{g}})^\perp &= \{ \mathcal{L}_{\bar{g}}(X): \langle X, \nabla_{\bar{g}} u \rangle_{L^2(M, \bar{g})} \text{ for all } u \in C^\infty(M) \}
  \end{align*}
  and in the case that $(M^n,\bar{g})$ is isometric to the round sphere $\mathbb{S}^n(r)$ with radius $r = \frac{1}{\sqrt{\lambda}}$, we have:
  \begin{align*}
    T_{\bar{g}}\mathcal{S}_{\bar{g}} &=S^{TT}_{2,\bar{g}}(M)\oplus (E_{n\lambda}^\perp\cdot \bar{g})\\
    (T_{\bar{g}}\mathcal{S}_{\bar{g}})^\perp &= \{ \mathcal{L}_{\bar{g}}(X): \langle X, \nabla_{\bar{g}} u \rangle_{L^2(M, \bar{g})} \text{ for all } u \in E_{n\lambda}^\perp \}
  \end{align*}
  here,
  \begin{align*}
    E_{n\lambda}^\perp = \{ u \in C^\infty(\mathbb{S}^n(r)): \Delta_{\mathbb{S}^n(r)} u + n\lambda u = 0 \}
  \end{align*}
  is the space of first eigenfunctions for the spherical metric.
\end{theorem}

Now we restrict the functional $\mathcal{H}_{\bar{g}}(g)$ on a local slice $\mathcal{S}_{\bar{g}}$, denoted by $\mathcal{H}_{\bar{g}}|_{\mathcal{S}_{\bar{g}}}(g)$, before we proceeding to the main proof, we need the following Morse lemma on Banach manifolds which was proved by Fisher and Marsden \cite{FM}.

\begin{lemma}[Morse lemma]\label{lemma: morse}
  Let $\mathcal{P}$ be a Banach manifold and $F: \mathcal{P} \rightarrow \mathbb{R}$ a $C^2-$function. Suppose that $\mathcal{Q} \subset \mathcal{P}$ is a submanifold, $F = 0$ and $dF = 0$ on $\mathcal{Q}$ and that there is a smooth normal bundle neighborhood of $\mathcal{Q}$ such that if $\mathcal{E}_x$ is the normal complement to $T_x\mathcal{Q}$ in $T_x\mathcal{P}$, then $d^2F(x)$ is weakly negative definite on $\mathcal{E}_x$(i.e. $d^2F(x)(v,v) \leq 0$ with equality only if $v = 0$). Let $\langle \cdot, \cdot \rangle_x$ be a weak Riemannian structure with a smooth connnection and assume that $F$ has a smooth $\langle \cdot, \cdot \rangle_x$-gradient, $Y(x)$. Assume that $DY(x): \mathcal{E}_x \rightarrow \mathcal{E}_x$ is an isomorphism for $x \in \mathcal{Q}$. Then there is a neighborhood $U$ of $\mathcal{Q}$ such that $y \in U$ and $F(y) \geq 0$ implies that $y \in \mathcal{Q}$.
\end{lemma}

Combine Proposition \ref{prop: characteristic_second_order} and Theorem \ref{theorem: Ebin-Palais}, we can find a local slice $\mathcal{S}_{\bar{g}}$ through $\bar{g}$, and we consider $\mathcal{Q}_{\bar{g}}$ as a submanifold of $\mathcal{S}_{\bar{g}}$ where
\begin{align*}
  \mathcal{Q}_{\bar{g}} := \{ c^2 \bar{g} \in \mathcal{S}_{\bar{g}} | c \neq 0 \}
\end{align*}
Applying previous lemma, we obtain the following regidity result:

\begin{proposition}\label{prop:rigidity}
  Suppose $(M^n,\bar{g})$ is a strictly stable Einstein manifold with Ricci curvature
  \begin{align*}
    Ric_{\bar{g}}=(n-1)\lambda\bar{g}
  \end{align*}
  where $\lambda>0$ is a constant. There is a neighbourhood of $\bar{g}$ in the local slice $\mathcal{S}_{\bar{g}}$, denoted by $U_{\bar{g}}$, such that any metric $g\in U_{\bar{g}} $ satisfying
  \begin{align*}
    \mathcal{H}_{\bar{g}}|_{\mathcal{S}_{\bar{g}}}(g)\geq\mathcal{H}_{\bar{g}}|_{\mathcal{S}_{\bar{g}}}(\bar{g})
  \end{align*}
  implies that $g=c^2\bar{g}$ for some positive constant $c$.
\end{proposition}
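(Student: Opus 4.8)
The plan is to use the scaling invariance of $\mathcal{H}_{\bar g}$ to reduce the claim to a \emph{strict} local maximum statement on a codimension-one subslice, and then to read off that statement from the coercivity of the second variation computed in Proposition 3.6.

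First I would pass to the volume-normalized subslice $\mathcal{S}^1_{\bar g}:=\mathcal{S}_{\bar g}\cap\{g:\text{Vol}(g)=\text{Vol}(\bar g)\}$. Since the TT-gauge slice $\mathcal{S}_{\bar g}$ is invariant under constant rescalings and $\mathcal{H}_{\bar g}(c^2 g)=\mathcal{H}_{\bar g}(g)$, every $g\in\mathcal{S}_{\bar g}$ near $\bar g$ is uniquely $c^2 g_1$ with $g_1\in\mathcal{S}^1_{\bar g}$ and $c$ near $1$, so it suffices to show that $\bar g$ is a strict local maximum of $\mathcal{H}_{\bar g}|_{\mathcal{S}^1_{\bar g}}$: then $\mathcal{H}_{\bar g}(g)\geq\mathcal{H}_{\bar g}(\bar g)$ gives $\mathcal{H}_{\bar g}(g_1)\geq\mathcal{H}_{\bar g}(\bar g)$, hence $g_1=\bar g$ and $g=c^2\bar g$. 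The tangent space is $T_{\bar g}\mathcal{S}^1_{\bar g}=\{h\in T_{\bar g}\mathcal{S}_{\bar g}:\int_M\text{tr}_{\bar g}h\,\dvg=0\}$, which equals $S^{TT}_{2,\bar g}(M)\oplus(V\cdot\bar g)$ with $V$ the space of mean-free functions in the non-spherical case and $V=E_{n\lambda}^\perp\cap\{u:\int_M u\,\dvg=0\}$ in the spherical case.

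Next I would check that $D^2\mathcal{H}_{\bar g}$ is strictly negative definite, in fact coercive, on $T_{\bar g}\mathcal{S}^1_{\bar g}\setminus\{0\}$. By Proposition 3.1 the metric $\bar g$ is critical, and by Proposition 4.2 the form is $\leq 0$ with degenerate set $\mathbb{R}\bar g$ (respectively $(\mathbb{R}\oplus E_{n\lambda})\bar g$ on the sphere), which meets $T_{\bar g}\mathcal{S}^1_{\bar g}$ only in $\{0\}$, since nonzero constants are not mean-free and $E_{n\lambda}$ is already excluded from the spherical slice. For the coercive bound I would use the formula of Proposition 3.6: strict stability gives a spectral gap $-\Delta_E\geq\delta\,\text{id}$ on $S^{TT}_{2,\bar g}(M)$, whence $-\int_M\Delta_E\big(\Delta_E-\tfrac{(n-2)^2}{2}\lambda\big)\hh\cdot\hh\,\dvg\gtrsim\|\hh\|_{H^2}^2$, while the strict Lichnerowicz--Obata inequality (strict on non-spheres by Lemma 4.1, and strict on the round sphere once one is $L^2$-orthogonal to $E_{n\lambda}$) gives $\int_M\big(|\nabla u|^2-n\lambda(u-\bar u)^2\big)\,\dvg\gtrsim\|u\|_{H^1}^2$ on $V$. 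Hence $-D^2\mathcal{H}_{\bar g}(h,h)\gtrsim\|\hh\|_{H^2}^2+\|u\|_{H^1}^2$ for $h\in T_{\bar g}\mathcal{S}^1_{\bar g}$.

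Finally I would upgrade this to genuine strict local maximality. Writing $g=\bar g+h$ with $h\in T_{\bar g}\mathcal{S}^1_{\bar g}$ small in a Sobolev norm embedding into $C^2$, a Taylor expansion at the critical point gives $\mathcal{H}_{\bar g}(g)=\mathcal{H}_{\bar g}(\bar g)+\tfrac12 D^2\mathcal{H}_{\bar g}(h,h)+\mathcal{R}(h)$ with a remainder $\mathcal{R}(h)$ that must be bounded, in the same $H^2\times H^1$ norms appearing above, by a superquadratic quantity; combined with the coercivity this forces $\mathcal{H}_{\bar g}(g)<\mathcal{H}_{\bar g}(\bar g)$ whenever $0\neq h$ is small. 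I expect this last step to be the main obstacle: since $\mathcal{H}_{\bar g}$ is built from $\sigma_2$, hence from the second derivatives of $g$, estimating $\mathcal{R}(h)$ in the $H^2\times H^1$ scale is not a formal Taylor bound but requires elliptic estimates and a careful bootstrap to absorb the loss of derivatives (the natural domain of $\mathcal{H}_{\bar g}$ being a higher $H^s$). This technical scheme is the content of \cite{LY2}, and I would follow that argument.
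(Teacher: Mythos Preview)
Your proposal is correct and matches the paper's approach: the paper does not give its own proof of this proposition but simply states that ``the detail of proof can be found in \cite{LY2},'' and your sketch (reduce by scale-invariance to a volume-normalized subslice, obtain coercivity of $-D^2\mathcal{H}_{\bar g}$ from Proposition~3.6 together with the spectral gap of $-\Delta_E$ and Lichnerowicz--Obata, then control the Taylor remainder) is exactly the scheme of \cite{LY2}, to which you also defer for the final step.
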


\begin{proof}
  From Proposition \ref{prop: characteristic_second_order}, we can see that $\bar{g}$ is the critical point of functional $\mathcal{H}_{\bar{g}}|_{\mathcal{S}_{\bar{g}}}$, and $D^2\mathcal{H}_{\bar{g}}|_{\mathcal{S}_{\bar{g}}}$ is non-positive definite on $T_{\bar{g}}\mathcal{S}_{\bar{g}}$. Furthermore, from Proposition \ref{prop:second_order_var}, we can see that $D^2\mathcal{H}_{\bar{g}}|_{\mathcal{S}_{\bar{g}}}$ is degenrate if and only if we restrict on
  \begin{align*}
    T_{\bar{g}}\mathcal{Q}_{\bar{g}} = \mathbb{R}\bar{g}
  \end{align*}
  From Theorem \ref{theorem: Ebin-Palais}, let $\mathcal{E}_{\bar{g}}$ be the $L^2$-orthogonal complement of $T_{\bar{g}}\mathcal{Q}$ in $T_{\bar{g}}\mathcal{S}_{\bar{g}}$, then base on the proposition,

  \begin{itemize}
    \item if $\bar{g}$ is not spherical, we have:
    \begin{align*}
      \mathcal{E}_{\bar{g}} = \bigg\{ h \in S^{TT}_{2,\bar{g}}(M) \oplus (C^\infty(M) \cdot \bar{g}) \bigg| \int_M tr_{\bar{g}} h dv_{\bar{g}} \bigg\}
    \end{align*}
    \item if $\bar{g}$ is spherical, we have:
    \begin{align*}
      \mathcal{E}_{\bar{g}} = \bigg\{ h \in S^{TT}_{2,\bar{g}}(M) \oplus (E_{n \lambda}^\perp \cdot \bar{g}) \bigg| \int_M tr_{\bar{g}} h dv_{\bar{g}} \bigg\}
    \end{align*}
  \end{itemize}
  Therefore, $D^2\mathcal{H}_{\bar{g}}|_{\mathcal{S}_{\bar{g}}}$ is strictly negative on $\mathcal{E}_{\bar{g}}$.

  As in \cite{Y}, we introduce the following weak Riemannian structure
  \begin{align*}
    \llangle h, k\rrangle_{g_s} := \int_M \langle h, k \rangle_{g_s} +  \langle \nabla_{g_s}h, \nabla_{g_s}k \rangle_{g_s} dv_{g_s} = \int_M \langle (1 - \Delta_{g_s}) h, k \rangle_{g_s} dv_{g_s}
  \end{align*}
  According to \cite{E}, this weak Riemannian structure has a smooth connnection and the $\llangle \cdot,\cdot \rrangle_{g_s}$ gradient of $\mathcal{H}_{\bar{g}}|_{\mathcal{S}_{\bar{g}}}$ is given by:
  \begin{align*}
    Y(g_s) = P_{g_s} (1 - \Delta_{g_s})^{-1} \bigg[ V_M(g_s)^{\frac{4}{n}}  \bigg( \Gamma^*_{g_s}(\rho_{g_s}) + \frac{2}{n} g_s V_M(g_s)^{-\frac{n+4}{n}}  \mathcal{H}_{\bar{g}}(g_s)\bigg)\bigg]
  \end{align*}
  where $P_{g_s}$ is the orthogonal projection to $T_{g_s}\mathcal{S}_{\bar{g}}$,  $\Gamma^*: C^\infty(M) \rightarrow S_2(M)$ is the $L^2-$adjoint of $D\sigma_2(g_s)$ and $\rho_{g_s} > 0$ is a smooth function on $M$ satisfying $dv_{\bar{g}} = \rho_{g_s} dv_{g_s}$. For our convience, we define the folowing vector field:
  \begin{align*}
    Z(g_s) :=  V_M(g_s)^{\frac{4}{n}}  \bigg( \Gamma^*_{g_s}(\rho_{g_s}) + \frac{2}{n} g_s V_M(g_s)^{-\frac{n+4}{n}}  \mathcal{H}_{\bar{g}}(g_s)\bigg).
  \end{align*}
  Since we assume that $\bar{g}$ is Einstein, $Z(\bar{g}) = 0$ and Furthermore,
  \begin{align*}
    (DZ_{\bar{g}}) \cdot h = D^2 \mathcal{H}_{\bar{g}}|_{\mathcal{S}_{\bar{g}}}(h, \cdot)
  \end{align*}
  for any $h \in \mathcal{E}_{\bar{g}}$, thus we have:
  \begin{align*}
    DY_{\bar{g}} = P_{g_s} (1 - \Delta_{g_s})^{-1}(DZ_{\bar{g}})
  \end{align*}
  is an isomorphism on $\mathcal{E}_{\bar{g}}$ because $D^2\mathcal{H}_{\bar{g}}|_{\mathcal{S}_{\bar{g}}}$ is strictly negative on $\mathcal{E}_{\bar{g}}$.

  According to proevious Morse lemma \ref{lemma: morse}, we can find a neighborhood $\mathcal{U}_{\bar{g}} \subset \mathcal{S}_{\bar{g}}$ such that for any metric $g_s \in \mathcal{U}_{\bar{g}}$ satisfying
  \begin{align*}
    \mathcal{H}_{\bar{g}}|_{\mathcal{S}_{\bar{g}}}(g_s) \geq \mathcal{H}_{\bar{g}}|_{\mathcal{S}_{\bar{g}}}(\bar{g})
  \end{align*}
  implies that $g_s \in \mathcal{Q}_{\bar{g}}$, which means that there is a non-zero constant $c$ such that $g_s = c^2 \bar{g}$
\end{proof}

Now we can prove the main theorem:
\begin{proof}[Proof of Theorem \ref{theorem: main1}]
  We can find a positive constant $\epsilon<\epsilon_0$ such that for any metric $g$ satisfies
  \begin{align*}
    ||g-\bar{g}||_{C^2(M,\bar{g})}<\epsilon
  \end{align*}
  there exists a diffeomorphism $\varphi$ such that $\varphi^*g\in \mathcal{U}_{\bar{g}}\subset \mathcal{S}_{\bar{g}}$, where $\mathcal{U}_{\bar{g}}$ is defined in Proposition \ref{prop:rigidity}.

  Notice that $D\mathcal{H}_{\bar{g}}\cdot h=0$ and $D^2\mathcal{H}_{\bar{g}}(h,h)\leq 0$, thus
  \begin{align*}
    \mathcal{H}_{\bar{g}}|_{\mathcal{S}_{\bar{g}}}(\varphi^*g)=\text{Vol}(\varphi^*g)^{\frac{4}{n}}\int_M \sigma_2(\varphi^*g)dv_{\bar{g}}\geq \text{Vol}(\bar{g})^{\frac{4}{n}}\int_M \sigma_2(\bar{g})dv_{\bar{g}}=\mathcal{H}_{\bar{g}}|_{\mathcal{S}_{\bar{g}}}(\bar{g}),
  \end{align*}

  we hold that $\varphi^*g=c^2g$ for some constant $c>0$ according to Proposition \ref{prop:rigidity}. Now assume that we have the reversed condition, which is $\text{Vol}(g)\geq\text{Vol}(\bar{g})$, it is obvious to see that
  \begin{align*}
    \text{Vol}(g)=c^n \text{Vol}(\bar{g})\geq \text{Vol}(\bar{g})
  \end{align*}
  which implies $c\geq 1$, but in other hands we hold
  \begin{align*}
    \sigma_2(g)=c^{-4}\sigma_2(\bar{g})\geq \sigma_2(\bar{g})
  \end{align*}
  which implies $c\leq 1$, thus $c=1$ which is to say $g$ is diffeomorphic to $\bar{g}$ and we finish the proof.
\end{proof}

When $\lambda=0$, we do not have the volume comparison, while we hold that in this case $g$ is conformal to $\bar{g}$, now we show the proof of Theorem \ref{theorem: main2}:

\begin{proof}[Proof of Theorem \ref{theorem: main2}]
    Recall Proposition 3.6 and let $\lambda=0$ we hold
  \begin{align*}
    \text{Vol}(\bar{g})^{-\frac{4}{n}}D^2\mathcal{H}_{\bar{g}}(h,h)=&-\frac{1}{4}\int_M |\Delta_E \hh|^2\dvg \leq 0
  \end{align*}
  Thus $D^2\mathcal{H}_{\bar{g}}$ is non-positive definite, since $\bar{g}$ is a critical metric for $\mathcal{H}_{\bar{g}}$, following the same method in the previous proof, there exists a diffeomorphism $\varphi$ such that $\varphi^*g \in U_{\bar{g}}\subset \mathcal{S}_{\bar{g}}$ and $\mathcal{H}_{\bar{g}}|_{\mathcal{S}_{\bar{g}}}(\varphi^*g)\geq\mathcal{H}_{\bar{g}}|_{\mathcal{S}_{\bar{g}}}(\bar{g})=0.$ This implies that $g=c^2\bar{g}$ thus $g$ is Ricci-flat.
\end{proof}

\begin{remark}
The stability condition is necessary, otherwise we have counterexamples. If the metric $\bg$ is unstable, according to Proposition \ref{prop:second_order_var}, $\bg$ is nolonger a local maximum. According to \cite{K}, a product of positive Einstein manifolds with identical Einstein constant is still Einstein but not stable, therefore, we can construct such product manifold as our counterexample. Let $(S^2,g_{S^2})$ be the unit sphere with canonical metric. Consider the canonical product manifold $(S^2\times S^2\times S^2\times S^2, \bar{g})$, where $\bar{g}=g_{S^2}+g_{S^2}+g_{S^2}+g_{S^2}$. Then we have
\begin{align*}
  \sigma(\bar{g})=-\frac{1}{2}|Ric|^2_{\bar{g}}+\frac{1}{7}R^2_{\bar{g}}=\frac{36}{7}.
\end{align*}
If we choose
\begin{align*}
  g_t=(1+4t^2)^{-1}g_{S^2}+(1+4t^2)^{-1}g_{S^2}+(1+3t)^{-1}g_{S^2}+(1-3t)^{-1}g_{S^2}
\end{align*}
then we have
\begin{equation*}
  \begin{split}
    \sigma(g_t)
    =&-\frac{1}{2}[2(1+4t^2)^2+2(1+4t^2)^2+2(1+3t)^2+2(1-3t)^2]\\
    &+\frac{1}{7}[2(1+4t^2)+2(1+4t^2)+2(1+3t)+2(1-3t)]^2\\
    =&-[2(1+8t^2+16t^4)+2(1+9t^2)]+\frac{4}{7}(4+8t^2)^2\\
    =&-(4+34t^2+32t^4)+\frac{64}{7}(1+4t^2+4t^4)\\
    =&\frac{36}{7}+\frac{18}{7}t^2+\frac{32}{7}t^4.
  \end{split}
\end{equation*}
It follows that
\begin{align*}
  \sigma(g_t)\geq\sigma(\bar{g}),
\end{align*}
and the equality holds if and only if $t=0$.

On the other hand, for the volume, we have
\begin{equation*}
  \begin{split}
    \text{Vol}(g_t)&=(1+4t^2)^{-1}(1+4t^2)^{-1}(1+3t)^{-1}(1-3t)^{-1}\text{Vol}(\bar{g})\\
    &=\frac{1}{1-t^2-56t^4-144t^6}\text{Vol}(\bar{g})
  \end{split}
\end{equation*}
It follows that
\begin{align*}
  \text{Vol}(g_t)\geq \text{Vol}(\bar{g})
\end{align*}
if $|t|$ is chosen small enough and the equality holds if and only if $t=0$.

So if we choose $0\leq|t|\leq \varepsilon$  for some small number $\varepsilon>0$,  we have
\begin{equation*}
  \begin{split}
    \sigma(g_t)>\sigma(\bar{g}), \ \ \  \text{Vol}(g_t)>\text{Vol}(\bar{g}).
  \end{split}
\end{equation*}
\end{remark}

\end{document}